\numberwithin{equation}{section}
\def\<{\langle}
\def\>{\rangle}
\def\K{{\bf K}}
\def\T{{\bf T}}
\def\U{{\bf U}}
\def\DD{{\mathcal D}}
\def\EE{{\mathcal E}}
\def\HH{{\mathcal H}}
\def\KK{{\mathcal K}}
\def\LL{{\mathcal L}}
\def\SS{{\mathcal S}}
\def\UU{{\mathcal U}}
\def\bbC{\mathbb{C}}
\def\bbD{\mathbb{D}}
\def\bbT{\mathbb{T}}
\def\bbH{\mathbb{H}}
\def\KKK{\mathfrak{K}}
\def\ddd{\mathfrak{d}}
\newcommand{\rank}{\mathop{\rm rank}}
\newcommand{\conv}{\mathop{\rm co}}
\newtheorem{lemma}{Lemma}[section]
\newtheorem{proposition}[lemma]{Proposition}
\newtheorem{theorem}[lemma]{Theorem}
\newtheorem{corollary}[lemma]{Corollary}
\newtheorem{conjecture}[lemma]{Conjecture}
\theoremstyle{definition}
\newtheorem{remark}[lemma]{Remark}
\newtheorem{definition}[lemma]{Definition}
\title{Numerical ranges of $C_0(N)$ contractions}
\author{Chafiq Benhida} 
\date{}
\address{UFR de Math\'ematiques, Universit\'e des Sciences et Technologies
de Lille, F-59655 Villeneuve D'Ascq Cedex, France}
\email{Chafiq.Benhida@math.univ-lille1.fr}
\author{Pamela Gorkin}
\address{Department of Mathematics, Bucknell University, Lewisburg, PA 17837, U.S.A.}
\email{pgorkin@bucknell.edu}
\author{Dan Timotin}
\address{Institute of Mathematics of the Romanian Academy, P.O. Box 1-764, Bucharest 
014700, Romania}
\email{Dan.Timotin@imar.ro}
\keywords{Contraction, unitary dilation, numerical range}
\subjclass{47A12, 47A20}
\begin{document}

\maketitle

\begin{abstract}
A conjecture of Halmos proved by Choi and Li states that the closure of the numerical range of a contraction on a Hilbert space is the intersection of the closure of the numerical ranges of all its unitary dilations. We show that for $C_0(N)$ contractions one can restrict the intersection to a smaller family of dilations. This generalizes a finite dimensional result of Gau and Wu.
\end{abstract}

\newcommand{\mysqth}{(1-|\theta(0)|^2)^{1/2}}
\newcommand{\mysql}{(1-|\lambda|^2)^{1/2}}

\section{Introduction}

 Suppose $\HH, \HH'$ are separable Hilbert spaces; we will denote by $\LL(\HH,\HH')$ the space of bounded linear operators $T:\HH\to\HH'$ and $\LL(\HH)=\LL(\HH,\HH)$. The numerical range of an operator $T\in\LL(\HH)$  is the set $$W(T):= \{\langle Tx, x\rangle: \|x\| = 1\}.$$ Much is known about this set; for example, it is convex, in the finite-dimen\-sional case it is compact, and if $T$ is normal, the closure of $W(T)$ is the convex hull of  the spectrum of $T$. In general, however, the numerical range is difficult to compute. In this paper, we study new ways of obtaining the numerical range of a contraction $T$ from the numerical ranges of certain unitary dilations of~$T$. 

If there is a Hilbert space $\KK$ containing $\HH$ and an operator $\tilde{T} \in \LL(\KK)$ such that $T = P_{\HH}\tilde{T}|\HH$, where $P_{\HH}$ denotes the orthogonal projection onto $\HH$, the operator $T$ is said to {\it dilate} to the operator $\tilde{T}$. (We note that we are considering the so-called {\it weak dilations} here, and not power dilations treated in Sz.-Nagy dilation theory.) The operator $\tilde{T}$ is said to be {\it a dilation of} $T$; more precisely, if $\dim(\KK\ominus \HH)=k$, then $\tilde T$ is called a \emph{$k$-dilation}.

We will be interested in unitary dilations. A result of Halmos \cite[Problem 222(a)]{Halmosproblem} shows that every contraction $T$  has  unitary dilations. 
It is easy to see that $$\overline{W(T)} \subseteq \cap \{\overline{W(U)}: U~\mbox{is a unitary dilation of}~ T\}.$$  Choi and Li showed that, in fact, 
$$\overline{W(T)} = \cap\{\overline{W(U)}: U \in \LL(\HH \oplus \HH)~\mbox{is a unitary dilation of}~ T\},$$ answering a question raised by Halmos (see, for example, \cite{Halmos}). We note that in the case that $\HH$ is $n$-dimensional, these unitary dilations are  $n$-dilations; that is, the dilations are of size $2n \times 2n$.

Before Choi and Li's work was completed, Gau and Wu \cite{GauWu} studied the so-called compressions of the shift on finite-dimensional spaces and their numerical ranges. If $\SS_n$ is the class of all completely nonunitary contractions $T$ (that is, $\|T\| \le 1$ and $T$ has no eigenvalue of modulus one) on an $n$-dimensional space with $\mbox{rank}(I - T^* T) = 1$, Gau and Wu \cite[Corollary 2.8]{GauWu} showed that, in fact, if $T \in \SS_n$, then
$$W(T) = \bigcap \{W(U): U ~\mbox{is an $(n + 1)$-dimensional unitary dilation of}~ T\}.$$ 
(There is no need  to take the  closure in the case of finite-dimensional spaces.)
Thus, the unitary dilations may be chosen to be $1$-dilations when $\mbox{rank}(I - T^* T)=1$.  An extension of this result can be found in \cite{GLW}: namely, if $T$ is an $n \times n$ contraction with $\mbox{rank}(I - T^* T) = k$, then
\begin{equation}\label{eq:economical}
W(T) = \bigcap \{W(U): U \in M_{n + k} ~\mbox{is a unitary $k$-dilation of}~ T\}.
\end{equation}

It is easy to see that if $\rank (I-T^*T)=k$, then $T$ has no unitary $\ell$-dilations for $\ell<k$, which explains why
 Gau, Li and Wu refer to~\eqref{eq:economical} in \cite{GLW} as the most ``economical'' solution to the Halmos problem. We also refer the reader to the papers \cite{GauWu1}, \cite{GauWu2}, \cite{GauWu3}, and \cite{Wu} for work related to this discussion. These authors, as well as others, (in particular, \cite{Mirman}, \cite{Mirman1}, and \cite{DGV}) have studied this problem from a geometric point of view.

The analogue of $\SS_n$ on a space of infinite dimension is the class of contractions with $\rank(I-T^*T)=\rank(I-TT^*)=1$ for which  $T^n$ and $T^*{}^n$ tend strongly to 0. It is well known (see, for instance,~\cite{SNF}) that such a $T$ is unitarily equivalent to some \emph{model operator} $S_\theta$ defined as follows: Suppose $S$ is the unilateral shift on $H^2$. For $\theta$  an inner function on the unit disc $\mathbb{D}$, define $K_\theta = H^2 \ominus \theta H^2$ and $S_\theta = P_{K_\theta} S|K_\theta$. (The operator  $S_\theta$ is often called a {\it compression} of the shift.) Noting that when $\theta(0) = 0$ all unitary $1$-dilations of $S_\theta$ are equivalent to rank-$1$ perturbations of $S_{z\theta}$, the authors of \cite{CGP}  show that when $\theta = B$ is a Blaschke product we have
$$\overline{W(S_B)} = \bigcap \{\overline{W(U)}: U~\mbox{a rank-1}\mbox{ perturbation of}~ S_{zB}\}.$$  

Our goal in this paper is to extend these results to operator-valued inner functions. After two preliminary sections, the main results appear in Section~\ref{se:main}, where we show that the closure of the numerical range of $S_{\Theta}$, where $\Theta$ is an inner function in $H^2(\mathbb{C}^N)$, is the intersection of the closures of the numerical ranges of an appropriate family of unitary dilations of $S_\Theta$  (see Corollary~\ref{co:after main}). In Theorem~\ref{th:C_0(N)} this result is extended to a larger class of contractions, called $C_0(N)$ (see Definition~\ref{de:C0(N)}).
In Section~\ref{N-dilations}, we describe the spectrum of the unitary dilations, obtaining a generalization of the scalar case. 
We conclude the paper with a brief discussion of a conjecture about the numerical ranges of contractions with finite defect index.

\section{Preliminaries}

\subsection{Matrix--valued analytic functions}

The basic reference that we will use for matrix--valued analytic functions (or, equivalently, functions with values in $\LL(\bbC^N)$) is~\cite{KK}; our definitions are simpler since we will consider only bounded (in the operator norm) analytic functions $F: \bbD \to \mathcal{M}_N$ (the set of $N \times N$ matrices). These share certain factorization properties similar to those of scalar analytic functions. 

A bounded analytic matrix-valued function $F: \bbD \to \mathcal{M}_N$ is called \emph{outer} if $\det F(z)$ is outer, and \emph{inner} if  the boundary values (which can be defined as radial limits almost everywhere) are isometries for almost all $e^{it}\in\bbT$. 

It is known \cite[Theorem 5.4]{KK} that any analytic bounded $F$ can be factorized as 
\begin{equation}\label{eq:i-o factorization}
F=\Theta E
\end{equation}
where $\Theta$ is inner and $E$ is outer, and, if $F=\hat \Theta \hat E$, then $\hat \Theta=\Theta V$, $\hat E=V^*E$ for some constant unitary $V$.

The inner function appearing in ~\eqref{eq:i-o factorization} can be further factorized in two parts. Recall that a Blaschke--Potapov factor $b(P,\lambda)(z)$  determined by a point $\lambda\in \bbD$ and an orthogonal projection $P$ on $\bbC^N$ is an inner function given by the formulas
\[
b(P,\lambda)(z)=\frac{|\lambda|}{\lambda} \frac{\lambda-z}{1-\bar\lambda z}P+(I-P) \text{ for }\lambda\not=0,\quad b(P,0)=zP+(I-P).
\]
A finite Blaschke--Potapov product is a product
\[
B_n(z)=b(P_1,\lambda_1)(z)\cdots b(P_n,\lambda_n)(z),
\]
for some $\lambda_j, P_j$, $j=1,\dots, n$. If $(\lambda_j)$ is a Blaschke sequence in $\bbD$ (that is, $\sum_j(1-|\lambda_j|)<\infty$), while $P_j$ is an arbitrary sequence of projections on $\bbC^N$, then the sequence $B_n(z)$ converges at each point $z\in \bbD$ to $B(z)$, where $B$ is an inner function denoted by ${\displaystyle\prod^\curvearrowright}_{j} b
(P_j,\lambda_j)$. A function that can be written as $B(z)V$, where $V$ is a constant unitary, is called an (infinite) Blaschke--Potapov product. The convergence is uniform on all compact subsets of $\bbD$. (Note that  such a function is sometimes called a \emph{left} Blaschke--Potapov product; we will not have the occasion to use right Blaschke--Potapov products.)
Finally, an inner function $\Theta$ is called  \emph{singular} if $\det \Theta(z)\not=0$ for all $z\in\bbD$.

With these definitions, Theorem~4.1 in~\cite{KK} states that any inner function $\Theta$ decomposes as $\Theta=BS$, where $B$ is a (finite or infinite) Blaschke--Potapov product and $S$ is singular. As in the case of inner--outer factorization, the decomposition is unique up to a unitary constant; more precisely, if we also have $\Theta=\hat B\hat S$ with $\hat B$ a Blaschke--Potapov product and $\hat S$ singular, then $\hat B=BV$ and $\hat S=V^*S$ for some constant unitary $V$.

The next lemma is a Frostman-type theorem that follows from~\cite{KK}.

\begin{lemma}\label{le:frostman}
Every inner function $\Theta$  in $H^2(\mathbb{C}^N)$ is a uniform limit of infinite Blaschke--Potapov products.
\end{lemma}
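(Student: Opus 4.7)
The plan is to reduce, via the multiplicative structure of inner functions, to the case of a singular inner function, and then run a matrix-valued Möbius-transform argument in the spirit of Frostman's classical theorem.

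\emph{Step 1 (reduction).} By Theorem~4.1 of \cite{KK}, factor $\Theta=BS$ with $B$ a (possibly infinite) Blaschke--Potapov product and $S$ singular inner. If $S_n$ are Blaschke--Potapov products converging uniformly to $S$ on $\bbD$, then $BS_n$ are Blaschke--Potapov products converging uniformly to $\Theta$. Hence it suffices to treat the case $\Theta=S$ singular.

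\emph{Step 2 (Möbius trick and convergence).} For $a\in\bbD$ set
\[
S_a(z)=\bigl(aI-S(z)\bigr)\bigl(I-\bar a\,S(z)\bigr)^{-1}.
\]
Since $\|S(z)\|\le 1$ on $\bbD$, the factor $I-\bar a S(z)$ is invertible by Neumann series, so $S_a$ is a bounded analytic $\MM_N$-valued function on $\bbD$. On the boundary the values of $S$ are unitary a.e., and the scalar Möbius transform $\phi_a$ maps $\bbT$ to $\bbT$; applied via functional calculus to a boundary unitary this yields a unitary, so $S_a$ is inner. A direct computation gives
\[
S_a(z)+S(z)=\bigl(aI-\bar a\,S(z)^{2}\bigr)\bigl(I-\bar a\,S(z)\bigr)^{-1},
\]
whence $\sup_{z\in\bbD}\|S_a(z)+S(z)\|\le 2|a|/(1-|a|)$, so $S_a\to -S$ uniformly on $\bbD$ as $a\to 0$.

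\emph{Step 3 (Frostman) and obstacle.} It remains to show that for $a$ in a dense subset of a neighborhood of $0$, the singular part of $S_a$ is trivial, so that $S_a$ is itself a Blaschke--Potapov product; one then picks $a_n\to 0$ in this set and observes that $-S_{a_n}$ (still a Blaschke--Potapov product, since multiplication by the unitary constant $-I$ preserves the class) converges uniformly to $S$, which together with Step~1 proves the lemma. The main obstacle is precisely this matrix Frostman step: in the scalar case one uses that $\phi_a\circ s$ is a Blaschke product outside a set of logarithmic capacity zero, but in the matrix-valued setting one needs the full singular part of $S_a$ (not only of $\det S_a$) to vanish. I would derive this from the uniqueness of the Blaschke--Potapov/singular factorization in \cite{KK} combined with an analysis of how the matrix Möbius transform acts on the matrix-valued singular measure associated to $S$, which is precisely the content in \cite{KK} that the statement of the lemma alludes to when it says ``follows from~\cite{KK}.''
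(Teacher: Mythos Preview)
Your overall strategy (matrix M\"obius transform plus a Frostman-type density statement from \cite{KK}) is the same as the paper's, but there are two points where your argument diverges and one of them is a genuine gap.

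First, the reduction in Step~1 is unnecessary. The paper works directly with an arbitrary inner $\Theta$, not just a singular one, and the argument goes through without change. This is harmless, but it adds length and an extra appeal to the factorization theorem.

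Second, and more importantly, your Step~3 is not a proof: you correctly identify the obstacle (showing that $S_a$ has trivial singular part for a dense set of $a$) and then gesture at ``an analysis of how the matrix M\"obius transform acts on the matrix-valued singular measure,'' without carrying it out. The paper avoids this analysis entirely by a clean observation: since $I-\bar\lambda\Theta$ is outer (its determinant is a nonvanishing bounded analytic function), the identity
\[
\Theta-\lambda I=\bigl((\Theta-\lambda I)(I-\bar\lambda\Theta)^{-1}\bigr)(I-\bar\lambda\Theta)
\]
is precisely the inner--outer factorization of $\Theta-\lambda I$. Then one can quote Corollary~6.1 of \cite{KK} verbatim: for a dense set of $\lambda\in\bbD$, the inner factor of $\Theta-\lambda I$ is a Blaschke--Potapov product. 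That is the matrix Frostman statement you need, and it applies directly to the M\"obius transform without any further work on singular measures. So the missing idea in your write-up is to recognize the M\"obius transform as the inner part of $\Theta-\lambda I$, which turns the ``obstacle'' into a one-line citation.
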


\begin{proof}
For $\lambda\in \bbD$,  $(\Theta - \lambda I)(I - \bar \lambda \Theta)^{-1}$ is inner and $I-\lambda\Theta$ is outer; thus 
\[
\Theta-\lambda I= \big((\Theta - \lambda I)(I - \bar \lambda \Theta)^{-1}\big) (I - \bar \lambda \Theta)
\] 
is the inner--outer factorization of $\Theta-\lambda I$. But Corollary~6.1 from~\cite{KK} says that for a dense set of $\lambda\in\bbD$ the inner factor of $\Theta-\lambda I$ is a Blaschke--Potapov product. If we take a sequence $\lambda_n\to 0$ with this property and we denote the corresponding Blaschke--Potapov product by $B^{(n)}$, then
\[
B^{(n)}=(\Theta- \lambda_n I)(I - \bar \lambda_n \Theta)^{-1},
\]
whence 
\[
\Theta=\lambda_n I+ B^{(n)}(I - \bar \lambda_n \Theta)= \lim_{n\to\infty}B^{(n)}.\qedhere
\]
\end{proof}

\subsection{Model spaces}

Let $\EE, \EE_*$ be Hilbert spaces. Suppose we are given an operator-valued inner function $\Theta(z):\EE\to \EE_*$. The \emph{model space} associated to it is 
\[
K_\Theta:= H^2(\EE_*)\ominus \Theta H^2(\EE),
\]
The operator $\mathbf T_\Theta :  H^2(\EE)\to  H^2(\EE_*)$ defined  by $\mathbf T_\Theta f=\Theta f$ is an isometry, and we have
 \begin{equation}\label{eq:projectiononK_Theta}
P_{K_\Theta}=I-\mathbf T_\Theta\mathbf T^*_\Theta.
\end{equation}

In particular, $\Theta(z)=zI_{\EE_*}:\EE_*\to \EE_*$ is inner; we will denote the corresponding $\mathbf{T}_\Theta$ simply by $\mathbf T_z$.
The \emph{model operator} $S_\Theta$ is the compression of $\mathbf T_z$ to $K_\Theta$; that is, $S_\Theta =P_{K_\Theta} \mathbf T_z P_{K_\Theta}| K_\Theta$.
 
 An inner function $\Theta$ is called \emph{pure} if it has no constant unitary direct summand; this is equivalent to assuming $\|\Theta(0)x\|<\|x\|$ for all $x\not=0$. A general inner function is the direct sum of a pure inner function and a unitary constant; from the point of view of model spaces and operators we may consider only pure inner functions. Thus, from now on, we assume that $\Theta$ is a pure inner function.

Recall that the defect operators and spaces of a contraction $T$ are defined by $D_T = (I - T^* T)^{1/2}$ and $\DD_T = \overline{\mbox{ran}\,D_T}$. The next lemma shows how one can identify the defect spaces of $S_\Theta$; a good reference is~\cite[Section~1]{F}.

\begin{lemma}\label{le:identification of defects} Suppose $\Theta(z):\EE\to \EE_*$ is a pure inner function; in particular, $D_{\Theta(0)}$ and $D_{\Theta(0)^*}$ have dense ranges.
Define the maps $\iota:\EE\to H^2(\EE_*)$, $\iota_*:\EE_*\to H^2(\EE_*)$ (on dense domains) by
\begin{equation}\label{eq:definitions of iota}
 \begin{split}
\iota (D_{\Theta(0)}\xi)& =\frac{1}{z}(\Theta(z)-\Theta(0))\xi,\quad \xi\in\EE;\\
\iota_*(D_{\Theta(0)^*}\xi_*)& =(I-\Theta(z)\Theta(0)^*)\xi_*, \quad \xi_*\in \EE_*.
\end{split}
\end{equation}
Then $\iota$ and $\iota_*$ are isometries with ranges $\DD_{S_\Theta}$ and $\DD_{S_\Theta^*}$ respectively, and the following diagram is commutative:
\begin{equation}\label{eq:commutative diagram}
 \begin{CD}
\EE @>\iota >> \DD_{S_\Theta}\\
@VV-\Theta(0)V  @VVS_\Theta V\\
\EE_* @>\iota_*>> \DD_{S_\Theta^*}
\end{CD}\quad.
\end{equation}
In particular, $\dim \DD_{S_\Theta}=\dim\EE$ and $\dim \DD_{S_\Theta^*}=\dim\EE_*$.
\end{lemma}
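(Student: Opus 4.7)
The plan is to establish four properties in sequence: $\iota$ and $\iota_*$ are well-defined isometries on their dense domains, their images lie inside $K_\Theta$, those images coincide with $\DD_{S_\Theta}$ and $\DD_{S_\Theta^*}$, and finally the diagram commutes. The main tools throughout are the inner identity $\Theta(e^{it})^*\Theta(e^{it})=I$ a.e.\ on $\bbT$ and Fourier-mode bookkeeping in $L^2(\EE)$.

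For the isometry part, I would expand $\|(\Theta-\Theta(0))\xi\|_{H^2(\EE_*)}^2$ using $\|\Theta\xi\|_{H^2}^2=\|\xi\|^2$ (because $\Theta$ is inner) and $\langle\Theta\xi,\Theta(0)\xi\rangle_{H^2}=\|\Theta(0)\xi\|^2$ (because $\Theta(0)\xi$ is a constant function, so the $H^2$ pairing is evaluation at $0$); the cross terms collapse to $\|\xi\|^2-\|\Theta(0)\xi\|^2=\|D_{\Theta(0)}\xi\|^2$, simultaneously proving well-definedness and isometry of $\iota$ on $\mathrm{ran}\,D_{\Theta(0)}$. An analogous computation handles $\iota_*$ via $\|\Theta\Theta(0)^*\xi_*\|_{H^2}=\|\Theta(0)^*\xi_*\|$. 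To check that the image lies in $K_\Theta$, I would pair $\iota(D_{\Theta(0)}\xi)$ with a generic $\Theta h$, $h\in H^2(\EE)$, transfer $\Theta^*$ to the left, and use $\Theta^*\Theta=I$ a.e.\ to rewrite the inner product as $\langle z^{-1}(I-\Theta^*\Theta(0))\xi,h\rangle_{L^2(\EE)}$; both summands have only strictly negative Fourier modes and hence pair trivially with $H^2(\EE)$. The case of $\iota_*$ is symmetric, reducing to $\langle(\Theta^*-\Theta(0)^*)\xi_*,h\rangle_{L^2(\EE)}$.

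The step I expect to be the most delicate is identifying the ranges with the defect spaces. Since $S_\Theta$ is the compression of the isometry $\mathbf{T}_z$ to $K_\Theta$, one has $\|S_\Theta f\|=\|f\|$ exactly when $zf\in K_\Theta$, so $\ker D_{S_\Theta}=\{f\in K_\Theta:zf\in K_\Theta\}$. The orthogonality $f\perp\Theta H^2(\EE)$ forces $\Theta^*f\in L^2(\EE)$ to have only strictly negative Fourier modes, and multiplying by $z$ then preserves this property exactly when the $(-1)$-st Fourier coefficient $c_{-1}\in\EE$ of $\Theta^*f$ vanishes. A short computation gives $\langle f,\iota(D_{\Theta(0)}\xi)\rangle_{H^2(\EE_*)}=\langle c_{-1},\xi\rangle_{\EE}$, where the contribution from the $\Theta(0)^*$-term drops because $\hat f(-1)=0$; hence $f\perp\iota(\EE)$ precisely when $c_{-1}=0$, precisely when $f\in\ker D_{S_\Theta}$. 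Since $\iota(\EE)$ is closed (as the isometric image of the complete space $\EE$), taking orthogonal complements inside $K_\Theta$ yields $\iota(\EE)=\DD_{S_\Theta}$. The identification $\iota_*(\EE_*)=\DD_{S_\Theta^*}$ follows from the symmetric argument applied to $S_\Theta^*$.

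Finally, for the commutative diagram I would compute $S_\Theta\iota(D_{\Theta(0)}\xi)=P_{K_\Theta}\bigl((\Theta-\Theta(0))\xi\bigr)=-P_{K_\Theta}(\Theta(0)\xi)$, since $\Theta\xi\in\Theta H^2(\EE)$ projects to $0$. Applying \eqref{eq:projectiononK_Theta} to the constant $\Theta(0)\xi$ gives $P_{K_\Theta}(\Theta(0)\xi)=(I-\Theta\Theta(0)^*)\Theta(0)\xi=\iota_*(D_{\Theta(0)^*}\Theta(0)\xi)$, and then the classical intertwining $D_{\Theta(0)^*}\Theta(0)=\Theta(0)D_{\Theta(0)}$ (obtained by taking square roots in $\Theta(0)(I-\Theta(0)^*\Theta(0))=(I-\Theta(0)\Theta(0)^*)\Theta(0)$) matches this with $\iota_*(-\Theta(0)\cdot D_{\Theta(0)}\xi)$, giving commutativity. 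The dimension statement is then immediate from the isometries $\iota$ and $\iota_*$.
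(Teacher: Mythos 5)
Your proof is correct; every step checks out. Note that the paper itself does not prove this lemma --- it simply refers the reader to Section~1 of Fuhrmann's paper \cite{F} --- so there is no in-paper argument to compare against. Your direct verification (the norm expansion giving $\|\xi\|^2-\|\Theta(0)\xi\|^2=\|D_{\Theta(0)}\xi\|^2$, the Fourier-mode argument placing the images inside $K_\Theta$, the identification $\ker D_{S_\Theta}=\{f\in K_\Theta: zf\in K_\Theta\}$ paired against the $(-1)$-st coefficient of $\Theta^*f$, and the use of the intertwining $D_{\Theta(0)^*}\Theta(0)=\Theta(0)D_{\Theta(0)}$ for the diagram) is exactly the standard route and is sound; the purity hypothesis is used, as it must be, to ensure $D_{\Theta(0)}$ and $D_{\Theta(0)^*}$ have dense ranges so that the formulas \eqref{eq:definitions of iota} determine $\iota$ and $\iota_*$ on all of $\EE$ and $\EE_*$.
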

\noindent We will occasionally write $\iota^\Theta$ and $\iota_*^\Theta$ to indicate the dependence on $\Theta$.

From the  Sz-Nagy--Foias theory it follows that any $C_{.0}$ contraction $T$ (that is, a contraction such that the powers of the adjoint tend strongly to 0) is unitarily equivalent to some $S_\Theta$, where we can take $\EE=\DD_T$ and $\EE_*=\DD_{T^*}$. 

We are actually interested in the particular case when $\dim \DD_T=\dim \DD_{T^*}=N<\infty$. The following definition appears in~\cite{SNF}. 

\begin{definition}\label{de:C0(N)}
A contraction $T\in\LL(\HH)$ is said to be \emph{of class $C_0(N)$} if $\dim \DD_T=\dim \DD_{T^*}=N<\infty$ and $T^n, T^*{}^n$ tend strongly to~0.
\end{definition}

If $T\in C_0(N)$, then $T$ is unitarily equivalent to $S_\Theta$, with $\Theta(z):\bbC^N\to\bbC^N$.
In this case  $\Theta(0)$ is a strict contraction, and  formulas~\eqref{eq:definitions of iota} are defined on all of  $\bbC^N$. The next lemma collects a few facts that we shall use.

\begin{lemma}\label{le:blaschke-potapov}
Suppose $\Theta(z):\bbC^N\to\bbC^N$ is an inner function.

\begin{itemize}
\item[(i)] $K_\Theta$ is finite dimensional if and only if $\Theta$ is a finite Blaschke--Potapov product.

\item[(ii)] If $\Theta_n\to \Theta$ in the uniform norm, then $P_{K_{\Theta_n}}\to P_{K_\Theta}$ uniformly.

\item[(iii)] Let $b_j = b(P_j, \lambda_j)$. If $B=\displaystyle\prod^\curvearrowright _k b_k$ is an infinite Blaschke--Potapov product and $B_n=b_1\cdots b_n$, then
\begin{itemize}
\item[(a)]
 $K_B=\overline{\bigcup_n K_{B_n}}$;
 
 \item[(b)]
$B_n \xi\to B\xi$ in $H^2(\bbC^N)$, for any $\xi\in \bbC^N$.

\end{itemize}

\end{itemize}
\end{lemma}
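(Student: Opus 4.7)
Part (ii) is a direct calculation from~\eqref{eq:projectiononK_Theta}: since $\mathbf{T}_\Theta$ is multiplication on $H^2(\bbC^N)$, one has $\|\mathbf{T}_{\Theta_n}-\mathbf{T}_\Theta\|=\|\Theta_n-\Theta\|_\infty\to 0$, and the identity $P_{K_\Theta}=I-\mathbf{T}_\Theta\mathbf{T}_\Theta^*$ then forces $P_{K_{\Theta_n}}\to P_{K_\Theta}$ in operator norm.

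For (iii)(a), the inclusion $K_{B_n}\subseteq K_B$ is immediate from $BH^2=B_n(C_nH^2)\subseteq B_nH^2$, where $C_n=\prod_{k>n}^\curvearrowright b_k$. For density, I take $f\in K_B$ orthogonal to every $K_{B_n}$; then $f\in B_nH^2$, so there exist $g_n\in H^2(\bbC^N)$ with $f=B_ng_n$ and $\|g_n\|=\|f\|$ (since $\mathbf{T}_{B_n}$ is an isometry). A weakly convergent subsequence $g_{n_k}\rightharpoonup g$ satisfies $g_{n_k}(z)\to g(z)$ pointwise in $\bbD$ (point evaluations are bounded linear on $H^2(\bbC^N)$); combined with the uniform-on-compacta convergence $B_n(z)\to B(z)$, passing to the pointwise limit in $f(z)=B_{n_k}(z)g_{n_k}(z)$ yields $f=Bg\in K_B\cap BH^2=\{0\}$. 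Part (iii)(b) is shorter: $\|B_n\xi\|_{H^2}=\|\xi\|=\|B\xi\|_{H^2}$ since $B_n$ and $B$ are inner, while the pointwise convergence $B_n(z)\xi\to B(z)\xi$ together with the $H^2$-norm bound gives weak convergence in $H^2(\bbC^N)$; weak convergence plus convergence of norms implies strong convergence.

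For (i), the ``if'' direction uses the orthogonal decomposition $K_{\Theta_1\Theta_2}=K_{\Theta_1}\oplus\Theta_1K_{\Theta_2}$, iterated over the factors of a finite Blaschke--Potapov product, together with $\dim K_{b(P,\lambda)}=\rank P$ (the factor acts as a scalar Blaschke factor on $\operatorname{ran}P$ and as the identity on $\ker P$). For the ``only if'' direction, $\dim K_\Theta<\infty$ means that $S_\Theta$ is a completely nonunitary contraction on a finite-dimensional space (recall that $\Theta$ is assumed pure throughout), and Sz.-Nagy--Foias theory identifies the characteristic function of such an operator with a finite Blaschke--Potapov product (up to a unitary constant); this forces $\Theta$ itself to be a finite Blaschke--Potapov product. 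This converse is the main obstacle, since it relies on the structural input of the Sz.-Nagy--Foias classification rather than the direct Hilbert-space arguments that handle (ii) and (iii).
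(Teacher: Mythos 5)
Your proof is correct, and for most of the lemma it runs parallel to the paper's: part (ii) is the same one-line consequence of $P_{K_\Theta}=I-\mathbf T_\Theta\mathbf T_\Theta^*$, and your (iii)(a) is exactly the ``standard normal family argument'' the paper alludes to for $BH^2(\bbC^N)=\bigcap_n B_nH^2(\bbC^N)$, just written out via weak compactness of the bounded sequence $g_n$ and boundedness of point evaluations. The genuine divergence is in (iii)(b): the paper factors $B=B_n\tilde B_n$, uses pointwise convergence to get $\tilde B_n(0)\to I$, and expands $\|B_n\xi-B\xi\|^2=2\|\xi\|^2-2\Re\langle\xi,\tilde B_n(0)\xi\rangle$ directly --- at the cost of a separate reduction when $B(0)$ is singular (splitting off the factors with $\lambda_j=0$). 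Your route --- equality of norms $\|B_n\xi\|=\|B\xi\|=\|\xi\|$ plus weak convergence deduced from pointwise convergence on reproducing kernels, hence strong convergence --- needs no case distinction and no manipulation of the tail product; it is arguably cleaner, though the paper's computation is more self-contained in that it never invokes the weak topology. For (i) the paper simply cites Peller; your ``if'' direction via $K_{\Theta_1\Theta_2}=K_{\Theta_1}\oplus\Theta_1K_{\Theta_2}$ and $\dim K_{b(P,\lambda)}=\rank P$ is fine, and your ``only if'' direction leans on the Sz.-Nagy--Foias identification of the characteristic function of a finite-dimensional c.n.u.\ contraction, which is a legitimate (if vaguer) citation of essentially the same external structural fact.
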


\begin{proof} Statement (i) can be found, for instance, in~\cite[Ch.2, Lemma 5.1]{P}, while (ii) follows from~\eqref{eq:projectiononK_Theta}.

As for (iii), a standard normal family argument shows that  $BH^2(\bbC^N)=\bigcap_n B_n H^2(\bbC^N)$, and therefore  (a) follows by passing to orthogonal complements. 

For (b), write $B=B_n\tilde{B}_n$, where $\tilde{B}_n$ is also an infinite Blaschke--Potapov product. If $B(0)$ is invertible, the pointwise convergence of $B_n$ to $B$ implies that  $\tilde{B}_n(0)\to I_{\bbC^N}$, whence (taking norms and scalar products in $H^2(\bbC^N)$)  
\[
\begin{split}
\|B_n\xi-B\xi\|^2&=2\|\xi\|^2-2\Re\<B_n\xi,B\xi\>=
 2\|\xi\|^2-2\Re\<\xi,\tilde{B}_n\xi\>\\&=
 2\|\xi\|^2-2\Re\<\xi,\tilde{B}_n(0)\xi\>\to 0.
\end{split}
\]
In the general case,  write $B_n = C D_n$, where $C$ contains the Blaschke--Potapov factors $b(P,\lambda)$ corresponding to $\lambda=0$. We have then $B=CD$ (with $D$ an infinite Blaschke--Potapov product), while the previous argument shows that $D_n\xi\to D\xi$ in $H^2(\bbC^N)$. Multiplying with the inner function $C$ yields the result.
\end{proof}

%

\section{Unitary $N$-dilations}

The next result is folklore; we give a short proof.

\begin{proposition}\label{pr:folklore}
 Suppose $T\in\LL(\HH)$ is a contraction such that $\dim \DD_T= \dim\DD_{T^*}=N<\infty$. If $U\in\LL(\HH\oplus\EE)$ is a unitary $N$-dilation of $T$, then there exist unitary operators $\omega:\EE\to\DD_T, \omega_*:\EE\to\DD_{T^*}$, such that
\begin{equation}\label{eq:general form of d-dilations}
 U=\begin{pmatrix}
    T& D_{T^*}\omega\\
\omega_*^* D_T& -\omega_*^* T^* \omega
   \end{pmatrix}.
\end{equation}
Conversely, any choice of $\omega:\EE\to\DD_T, \omega_*:\EE\to\DD_{T^*}$ yields, through formula~\eqref{eq:general form of d-dilations}, a unitary $N$-dilation of $T$.
\end{proposition}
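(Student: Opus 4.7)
The approach is to write any unitary $N$-dilation $U$ in block form relative to $\HH\oplus\EE$,
\[
U = \begin{pmatrix} T & B \\ C & D \end{pmatrix},
\]
with $B\in\LL(\EE,\HH)$, $C\in\LL(\HH,\EE)$, $D\in\LL(\EE)$, and to read the claimed formula off of the relations $U^*U=UU^*=I_{\HH\oplus\EE}$. These relations unfold, among others, into
\[
BB^* = I_\HH-TT^* = D_{T^*}^2, \qquad C^*C = I_\HH-T^*T = D_T^2, \qquad T^*B + C^*D = 0,
\]
and I would use these, in turn, to pin down $B$, $C$, and $D$.

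For the off-diagonal blocks: since $BB^* = D_{T^*}^2$, the range of $B$ lies in $\DD_{T^*}$; by hypothesis this is an $N$-dimensional space on which $D_{T^*}$ is a positive, injective, self-adjoint operator, hence invertible. Setting $\omega := (D_{T^*}|_{\DD_{T^*}})^{-1}B \colon \EE \to \DD_{T^*}$ yields $B = D_{T^*}\omega$, and $BB^* = D_{T^*}^2$ translates to $\omega\omega^* = I_{\DD_{T^*}}$; the dimension equality $\dim\EE = N = \dim\DD_{T^*}$ then promotes $\omega$ from a coisometry to a unitary. An entirely analogous manipulation of $C^*C = D_T^2$ produces a unitary $\omega_* \colon \EE \to \DD_T$ with $C = \omega_*^*D_T$.

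With $B$ and $C$ in hand, the off-diagonal relation $T^*B + C^*D = 0$ combined with the intertwining identity $T^*D_{T^*} = D_T T^*$ becomes
\[
D_T(T^*\omega + \omega_* D) = 0.
\]
Since $T^*$ sends $\DD_{T^*}$ into $\DD_T$, both $T^*\omega$ and $\omega_* D$ take values in $\DD_T$, where $D_T$ is injective; hence $T^*\omega + \omega_* D = 0$ and $D = -\omega_*^*T^*\omega$, which is the direct implication. For the converse I would recognize the candidate as
\[
U \;=\; \begin{pmatrix} I_\HH & 0 \\ 0 & \omega_*^* \end{pmatrix} \begin{pmatrix} T & D_{T^*} \\ D_T & -T^* \end{pmatrix} \begin{pmatrix} I_\HH & 0 \\ 0 & \omega \end{pmatrix},
\]
where the middle factor is the classical Julia matrix viewed as a unitary $\HH\oplus\DD_{T^*} \to \HH\oplus\DD_T$; its unitarity is a one-line check from $T^*T + D_T^2 = I$, $TT^* + D_{T^*}^2 = I$, and the intertwining identities. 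Flanking by unitaries preserves unitarity, and the $(1,1)$-block remains $T$ by construction, so $U$ is the desired $N$-dilation.

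The only non-routine step is the upgrade of the polar-decomposition data from partial isometries to genuine unitaries $\omega, \omega_*$. This is precisely where the finite-dimensional hypothesis $\dim\EE = \dim\DD_T = \dim\DD_{T^*} = N$ is indispensable; without it one would obtain only partial isometries and lose the clean parametrization.
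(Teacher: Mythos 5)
Your proof is correct, and it takes a genuinely more self-contained route than the paper's. The paper invokes Theorem 1.3 of Arsene--Gheondea, which parametrizes \emph{all} contractive $2\times 2$ block completions as $T_{12}=D_{T^*}\Gamma_1$, $T_{21}=\Gamma_2 D_T$, $T_{22}=-\Gamma_2 T^*\Gamma_1+D_{\Gamma_2^*}\Gamma D_{\Gamma_1}$, and then shows that unitarity of $U$ forces $\Gamma_1$ and $\Gamma_2$ to be isometric (from the isometry of the first column and row) and hence, by the dimension count $\dim\EE=\dim\DD_T=\dim\DD_{T^*}=N$, unitary, so that the residual parameter $\Gamma$ acts between zero spaces and drops out. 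You instead read the identities $BB^*=D_{T^*}^2$, $C^*C=D_T^2$, $T^*B+C^*D=0$ directly off $U^*U=UU^*=I$ and solve for the blocks by inverting $D_{T^*}$ and $D_T$ on their defect spaces, which is legitimate because those spaces are finite-dimensional (hence closed, and the positive injective restrictions are invertible); the dimension hypothesis enters at the same point as in the paper, to upgrade the coisometries $\omega,\omega_*$ to unitaries. Your derivation of the corner block from $D_T(T^*\omega+\omega_*D)=0$ and injectivity of $D_T$ on $\DD_T$ is sound, and your converse is exactly the Julia-matrix factorization the paper itself displays immediately after the proposition. What the citation-based route buys is the broader picture of general contractive completions, where the extra parameter $\Gamma$ survives; what your route buys is independence from that reference. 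One cosmetic remark: your conventions $\omega:\EE\to\DD_{T^*}$ and $\omega_*:\EE\to\DD_T$ are the ones actually consistent with formula~\eqref{eq:general form of d-dilations} and with the paper's own proof; the domains declared in the statement of the proposition appear to be transposed.
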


\begin{proof}
 Theorem~1.3 of \cite{AG} says that if
\[
 \begin{pmatrix}
    T& T_{12}\\
T_{21}& T_{22}
   \end{pmatrix}: \HH\oplus\EE \to\HH\oplus\EE 
\]
is a contraction, then there exist contractions $\Gamma_1:\EE\to \DD_{T^*}$, $\Gamma_2:\DD_T\to\EE$ and $\Gamma:\DD_{\Gamma_1}\to\DD_{\Gamma_2^*}$ such that $T_{12}=D_{T^*}\Gamma_1$, $T_{21}=\Gamma_2 D_T$ and 
$T_{22}=- \Gamma_2 T^*\Gamma_1+D_{\Gamma_2^*}\Gamma D_{\Gamma_1}$.  We apply this result to $U$. 

Since 
\[
\| U(x\oplus 0) \|^2 =\|Tx\|^2+\|\Gamma_2 D_Tx\|^2
\le \|Tx\|^2+\|D_Tx\|^2 = \|x\|^2,
\]
and the first column of $U$ is an isometry, the last term is equal to the first; so the middle inequality is an
equality. This means that $\Gamma_2$ acts isometrically on the image
of $D_T$; but this is precisely $\mathcal{D}_T$, whence $\Gamma_2$ has to be an isometry. 

In fact, $\Gamma_2$ is  unitary, since it acts between spaces of the same dimension~$N$. Similarly, we obtain that $\Gamma_1$ is  unitary, which implies $\Gamma$ acts between~$0$ spaces. The result follows if we let  $\omega=\Gamma_1$ and $\omega_*=\Gamma_2^*$.

The converse is immediate.
\end{proof}

We can write~\eqref{eq:general form of d-dilations} as
\begin{equation*}
 U=\begin{pmatrix}
    I&0\\ 0& \omega_*^*
   \end{pmatrix}
\begin{pmatrix}
    T& D_{T^*}\\
 D_T& - T^* 
   \end{pmatrix}
\begin{pmatrix}
    I&0\\ 0& \omega
   \end{pmatrix}.
\end{equation*}
The next corollary follows immediately from this formula.

\begin{corollary}\label{co:d-dilations}
Suppose $T\in\LL(\HH)$ is a contraction with $\dim \DD_T= \dim\DD_{T^*}=N<\infty$ and
\[
\mathbf{u}=\begin{pmatrix}
T&\mathbf u_{12}\\\mathbf u_{21}&\mathbf u_{22}
\end{pmatrix}\in\LL(\HH\oplus \EE)
\]
is a unitary $N$-dilation of $T$. Then any unitary $N$-dilation $U$ of $T$ on $\HH\oplus \EE'$  (for some Hilbert space $\EE'$ with $\dim\EE'=N$) is given by the formula
\begin{equation}\label{eq:formula with three terms}
 U=\begin{pmatrix}
    I&0\\ 0& \Omega_*^*
   \end{pmatrix}
\mathbf u
\begin{pmatrix}
    I&0\\ 0& \Omega
   \end{pmatrix}
\end{equation}
where $\Omega,\Omega_*:\EE'\to\EE$ are unitaries.
\end{corollary}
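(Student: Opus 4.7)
The plan is to apply Proposition~\ref{pr:folklore} to both dilations and then read off the intertwining unitaries algebraically. By Proposition~\ref{pr:folklore}, the fixed dilation $\mathbf u$ on $\HH\oplus\EE$ corresponds to a pair of unitaries $\omega:\EE\to\DD_T$, $\omega_*:\EE\to\DD_{T^*}$, and one can rewrite it in the factored form given just before the corollary, namely
\[
\mathbf u=\begin{pmatrix} I&0\\ 0&\omega_*^*\end{pmatrix}\begin{pmatrix} T&D_{T^*}\\ D_T&-T^*\end{pmatrix}\begin{pmatrix} I&0\\ 0&\omega\end{pmatrix}.
\]
Applied to the arbitrary $N$-dilation $U$ on $\HH\oplus\EE'$, the same proposition yields unitaries $\omega':\EE'\to\DD_T$ and $\omega'_*:\EE'\to\DD_{T^*}$ together with the analogous factorization of $U$.

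Next I would define the candidate unitaries $\Omega,\Omega_*:\EE'\to\EE$ by the obvious choice
\[
\Omega:=\omega^*\omega',\qquad \Omega_*:=\omega_*^*\omega_*'.
\]
Both are compositions of unitaries between spaces of the same finite dimension $N$, so both are unitary. Substituting the factored expression for $\mathbf u$ into the right-hand side of~\eqref{eq:formula with three terms} and collecting the block-diagonal factors gives
\[
\begin{pmatrix} I&0\\ 0&\Omega_*^*\end{pmatrix}\mathbf u\begin{pmatrix} I&0\\ 0&\Omega\end{pmatrix}
=\begin{pmatrix} I&0\\ 0&(\omega_*\Omega_*)^*\end{pmatrix}\begin{pmatrix} T&D_{T^*}\\ D_T&-T^*\end{pmatrix}\begin{pmatrix} I&0\\ 0&\omega\Omega\end{pmatrix},
\]
and by the choice of $\Omega,\Omega_*$ the inner unitaries are precisely $\omega'$ and $\omega'_*$. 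Thus the right-hand side equals the factored form of $U$, proving~\eqref{eq:formula with three terms}.

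There is no real obstacle here: the work has already been done in Proposition~\ref{pr:folklore}, which parametrizes $N$-dilations by pairs of unitaries onto the (fixed) defect spaces $\DD_T$ and $\DD_{T^*}$. The corollary is essentially the observation that this parametrization is a torsor: once one dilation is fixed, the group of pairs of unitaries on $\EE'$ (equivalently, a pair $(\Omega,\Omega_*)$) acts simply transitively on the set of $N$-dilations of $T$, and the action is precisely the conjugation/multiplication by the block-diagonal factors on the right-hand side of~\eqref{eq:formula with three terms}. The only point that needs to be mentioned explicitly is that equality of dimensions forces $\Omega$ and $\Omega_*$ to be unitary (as opposed to merely isometric), which is automatic since $\dim\EE=\dim\EE'=N<\infty$.
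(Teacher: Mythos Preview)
Your argument is correct and is precisely the spelled-out version of what the paper means by ``follows immediately from this formula'': apply Proposition~\ref{pr:folklore} to both $\mathbf u$ and $U$, then set $\Omega=\omega^*\omega'$, $\Omega_*=\omega_*^*\omega_*'$ and use $\omega\omega^*=I_{\DD_T}$, $\omega_*\omega_*^*=I_{\DD_{T^*}}$. There is nothing to add.
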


We are interested in consequences for $S_\Theta$.  The notation below refers to that of Lemma~\ref{le:identification of defects}.

\begin{lemma}\label{le:formulas for extensions}
All unitary $N$-dilations of $S_\Theta$ to $K_\Theta\oplus \bbC^N$ can be indexed by unitaries $\Omega,\Omega_*:\bbC^N\to\bbC^N$, according to the formula
\begin{equation}\label{eq:UOmegaOmega*}
U_{\Omega,\Omega_*}=
\begin{pmatrix}
S_\Theta & \iota_* D_{\Theta(0)^*}\Omega\\
\Omega_*^* D_{\Theta(0)} \iota^* & \Omega_*^*\Theta(0)^*\Omega
\end{pmatrix}.
\end{equation}
\end{lemma}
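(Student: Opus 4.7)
The plan is to combine Proposition~\ref{pr:folklore} applied to $T = S_\Theta$ with the canonical identifications $\iota, \iota_*$ supplied by Lemma~\ref{le:identification of defects}. Proposition~\ref{pr:folklore} expresses every unitary $N$-dilation of $S_\Theta$ on $K_\Theta \oplus \bbC^N$ in the Julia--Halmos form
\[
U = \begin{pmatrix} S_\Theta & D_{S_\Theta^*}\omega \\ \omega_*^* D_{S_\Theta} & -\omega_*^* S_\Theta^* \omega \end{pmatrix},
\]
with $\omega$ and $\omega_*$ unitaries from $\bbC^N$ onto $\DD_{S_\Theta^*}$ and $\DD_{S_\Theta}$ respectively. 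In the $C_0(N)$ setting, $\Theta(0)$ is a strict contraction, so $D_{\Theta(0)}$ and $D_{\Theta(0)^*}$ are invertible and the maps $\iota, \iota_*$ of~\eqref{eq:definitions of iota} extend to unitary isomorphisms from $\bbC^N$ onto the respective defect spaces. Writing $\omega = \iota_*\Omega$ and $\omega_* = \iota\Omega_*$ therefore establishes the claimed bijection with pairs of unitaries $\Omega, \Omega_*$ on $\bbC^N$.

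After this substitution, converting the block entries into~\eqref{eq:UOmegaOmega*} amounts to three intertwining identities:
\[
D_{S_\Theta^*}\iota_* = \iota_* D_{\Theta(0)^*},\qquad \iota^* D_{S_\Theta} = D_{\Theta(0)}\iota^*,\qquad \iota^* S_\Theta^*\iota_* = -\Theta(0)^*.
\]
The third identity follows immediately from Lemma~\ref{le:identification of defects} by taking adjoints in $S_\Theta\iota = -\iota_*\Theta(0)$ and right-multiplying by $\iota_*$, invoking $\iota_*^*\iota_* = I_{\bbC^N}$. The first two express that the defect operators of $S_\Theta$ correspond, via $\iota$ and $\iota_*$, to those of $\Theta(0)$.

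To establish the middle identity I would first prove the squared form $D_{S_\Theta}^2 = \iota D_{\Theta(0)}^2\iota^*$. Both sides are self-adjoint operators on $K_\Theta$ whose ranges lie in $\DD_{S_\Theta}$ and which vanish on $\DD_{S_\Theta}^\perp$; and on the spanning set $\iota(\bbC^N) = \DD_{S_\Theta}$ the commutative diagram of Lemma~\ref{le:identification of defects} yields
\[
\langle(I - S_\Theta^* S_\Theta)\iota(\eta),\iota(\eta)\rangle = \|\iota(\eta)\|^2 - \|S_\Theta\iota(\eta)\|^2 = \|\eta\|^2 - \|\Theta(0)\eta\|^2 = \langle\iota D_{\Theta(0)}^2\iota^*\iota(\eta),\iota(\eta)\rangle,
\]
so their difference has vanishing quadratic form on $\DD_{S_\Theta}$ and hence vanishes identically. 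Taking the unique positive square root gives $D_{S_\Theta} = \iota D_{\Theta(0)}\iota^*$, whence $D_{S_\Theta}\iota = \iota D_{\Theta(0)}$. The starred identity $D_{S_\Theta^*}\iota_* = \iota_*D_{\Theta(0)^*}$ is proved identically once one observes the dual commutation $S_\Theta^*\iota_* = -\iota\Theta(0)^*$, which itself follows by combining the Julia inclusion $S_\Theta^*\DD_{S_\Theta^*}\subseteq\DD_{S_\Theta}$ with the adjoint of the diagram in Lemma~\ref{le:identification of defects}. The main technical point is this squaring-and-square-root passage; with the three intertwinings in hand, inserting $\omega = \iota_*\Omega$ and $\omega_* = \iota\Omega_*$ into the Julia--Halmos form directly produces~\eqref{eq:UOmegaOmega*}.
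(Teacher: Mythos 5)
Your proposal is correct and follows essentially the same route as the paper: apply Proposition~\ref{pr:folklore} to $T=S_\Theta$, use $\iota,\iota_*$ to identify the defect spaces with $\bbC^N$, and verify the intertwinings $D_{S_\Theta^*}\iota_*=\iota_* D_{\Theta(0)^*}$, $\iota^*D_{S_\Theta}=D_{\Theta(0)}\iota^*$, $\iota^*S_\Theta^*\iota_*=-\Theta(0)^*$ coming from the commutative diagram of Lemma~\ref{le:identification of defects}. The only cosmetic difference is that the paper first assembles the single dilation $V=U_{I,I}$ and then invokes Corollary~\ref{co:d-dilations} to index all others, whereas you substitute $\omega=\iota_*\Omega$, $\omega_*=\iota\Omega_*$ directly (and you spell out the square-root step $\iota^*D_{S_\Theta}^2\iota=D_{\Theta(0)}^2\Rightarrow\iota^*D_{S_\Theta}\iota=D_{\Theta(0)}$ a bit more carefully than the paper does).
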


\begin{proof} Let us apply Proposition~\ref{pr:folklore} (the part stated as a converse) to the case $T=S_\Theta$, $\EE=\bbC^N$, $\omega=\iota_*$, $\omega_*=\iota$. 
We obtain the following $N$-dilation of $T$ to $\HH\oplus \bbC^N$:
\begin{equation*}
V=\begin{pmatrix}
    S_\Theta& D_{S_\Theta^*}\iota_*\\
\iota^* D_{S_\Theta}& -\iota^* S_\Theta^* \iota_*
   \end{pmatrix}.
\end{equation*}

The commutative diagram~\eqref{eq:commutative diagram} yields the relations
\begin{equation}\label{eq:intertwining}
 S_\Theta\iota=-\iota_* \Theta(0), \qquad \iota^* S_\Theta^*= -\Theta(0)^* \iota_*^*.
\end{equation}
It follows immediately that $-\iota^* S_\Theta^* \iota_*=\Theta(0)^*$. 

From~\eqref{eq:intertwining} we have $\iota^*S_\Theta^* S_\Theta \iota=\Theta(0)^*\Theta(0)$, whence 
$\iota^*(I_\HH-S_\Theta^* S_\Theta) \iota=I_{\bbC^N}-\Theta(0)^*\Theta(0)$ and thus 
$\iota^*D_{S_\Theta}\iota =D_{\Theta(0)}$. Multiplying the last relation with $\iota^*$ on the right, and taking into account that $\iota\iota^*=P_{\DD_{S_\Theta}}$ and $D_{S_\Theta}P_{\DD_{S_\Theta}}=D_{S_\Theta}$, we obtain
\[
 \iota^*D_{S_\Theta}=D_{\Theta(0)}\iota^*.
\]
A similar computation yields
\[
 D_{S_\Theta^*}\iota_* =\iota_* D_{\Theta(0)^*},
\]
and thus
\[
 V=\begin{pmatrix}
    S_\Theta&\iota_* D_{\Theta(0)^*} \\
D_{\Theta(0)}\iota^*& \Theta(0)^*
   \end{pmatrix}.
\]
Applying Corollary~\ref{co:d-dilations} to $\EE=\EE'=\bbC^N$ and $\mathbf u=V$ finishes the proof.
\end{proof}

We have thus parametrized all unitary $N$-dilations of $S_\Theta$ to $K_\Theta\oplus \bbC^N$ by pairs of unitaries on $\bbC^N$. If we are interested only in classes of unitary equivalence, we may take a single unitary as parameter, since $U_{\Omega,\Omega_*}$, $U_{\Omega\Omega_*^*,I}$, and $U_{I,\Omega_*\Omega^*}$ are all unitarily equivalent, and therefore  have the same numerical range. In the sequel, we let
\begin{equation}\label{eq:definition of U_Omega}
 U_{\Omega}^\Theta:=
\begin{pmatrix}
S_\Theta & \iota_* D_{\Theta(0)^*}\Omega\\
D_{\Theta(0)} \iota^* & \Theta(0)^*\Omega
\end{pmatrix}.
\end{equation}

%

\section{The main result}\label{se:main}

Let $\KKK$ denote the complete metric space of all nonempty compact subsets of $\bbC$, endowed with the Hausdorff distance $\ddd$. Suppose that $A\in\KKK$ and $\tau:X\to\KKK$ is a continuous mapping defined on some compact space $X$. We will say that \emph{$\tau$ wraps $A$} if for each open half-plane $\bbH$ in $\bbC$ that contains $A$ there exists $x\in X$ such that $\tau(x)\subset \bbH$. 

\begin{lemma}\label{le:hausdorff}
Let $A_n,A\in \KKK$ with $A_n\to A$.  Let $X$ be a compact space, and $\tau_n,\tau:X\to \KKK$ be continuous mappings such that $\tau_n\to \tau$ uniformly on $X$. Suppose that for each $n$, $\tau_n$ wraps $A_n$. Then $\tau$ wraps $A$.
\end{lemma}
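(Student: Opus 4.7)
Fix an open half-plane $\bbH$ containing $A$; the goal is to produce $x\in X$ with $\tau(x)\subset \bbH$. Since $A$ is compact and $\bbH^c$ is closed, $\epsilon:=\ddd(A,\bbH^c)>0$. I will introduce a slightly shrunken parallel open half-plane $\bbH':=\{z\in\bbC:d(z,\bbH^c)>\epsilon/4\}$, so that $\overline{\bbH'}\subset \bbH$ and every point within distance $\epsilon/2$ of $A$ lies in $\bbH'$. Since $A_n\to A$ in Hausdorff distance, this forces $A_n\subset \bbH'$ for all $n$ sufficiently large.

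For each such $n$, the wrapping hypothesis applied to $\tau_n$ and to the half-plane $\bbH'$ (which contains $A_n$) yields a point $x_n\in X$ with $\tau_n(x_n)\subset \bbH'$. The compactness of $X$ lets me pass to a subsequence $x_{n_k}\to x_\infty$. I would then show that $\tau_{n_k}(x_{n_k})\to \tau(x_\infty)$ in $\KKK$ via the triangle inequality
\[
\ddd\bigl(\tau_{n_k}(x_{n_k}),\tau(x_\infty)\bigr)\le \ddd\bigl(\tau_{n_k}(x_{n_k}),\tau(x_{n_k})\bigr)+\ddd\bigl(\tau(x_{n_k}),\tau(x_\infty)\bigr),
\]
the first term tending to $0$ by uniform convergence $\tau_n\to \tau$ on $X$, and the second by continuity of $\tau$.

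Finally, since each set $\tau_{n_k}(x_{n_k})$ lies in the closed set $\overline{\bbH'}$, a standard property of Hausdorff convergence (every point of the Hausdorff limit is approximated by points of the approximating sets) gives $\tau(x_\infty)\subset \overline{\bbH'}\subset \bbH$, completing the argument. The main technical point to get right is the construction of $\bbH'$: it must be parallel to $\bbH$ (so that it is itself an open half-plane), must contain all $A_n$ for $n$ large (using the uniform distance estimate $\ddd(A_n,A)<\epsilon/2$), and must have its \emph{closure} strictly inside $\bbH$ so that the passage to the Hausdorff limit does not escape $\bbH$. Everything else is a routine compactness-and-subsequence argument.
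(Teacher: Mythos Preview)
Your argument is correct and follows essentially the same route as the paper's proof: shrink the half-plane to a parallel half-plane $\bbH'$ with $\overline{\bbH'}\subset\bbH$, use $A_n\to A$ to place $A_n\subset\bbH'$ for large $n$, apply the wrapping hypothesis to obtain $x_n$ with $\tau_n(x_n)\subset\bbH'$, and then pass to a limit point using compactness together with uniform convergence of $\tau_n$ and continuity of $\tau$. Your write-up simply makes explicit the ``$\epsilon/2$ argument'' the paper leaves to the reader. One cosmetic point: when you write $\ddd(A,\bbH^c)$, the set $\bbH^c$ is not compact and hence not in $\KKK$, so you should use the ordinary distance $\operatorname{dist}(A,\bbH^c)$ rather than the Hausdorff metric symbol.
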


\begin{proof} If $\bbH$ is an open half-plane and $A\subset\bbH$, let $\bbH'$ be a slight translate of $\bbH$ towards $A$ such that we still have $A\subset \bbH'$.  
 For $n$ sufficiently large $A_n\subset \bbH'$. It  follows then from the assumption that for each $n$ sufficiently large there exists $x_n\in  X$ such that $\tau_n(x_n)\subset \bbH'$. Letting $x$ be a limit point of $x_n$ in $X$, a simple $\epsilon/2$  argument shows that $\tau(x)\subset \bbH$.  
\end{proof}
 
\begin{remark}\label{re:wrapping and intersection}
Suppose $A\subset\tau(x)$ for all $x$, and $A$ and $\tau(x)$ are convex for all $x$. If $\tau$ wraps $A$, then $A=\cap_{x\in X}\tau(x)$. The converse is not true, as can easily be seen by considering $A$ to be the intersection of two line segments. However, the result that we quote below (in Theorem~\ref{th:main}, Step 1) from~\cite{GLW} actually yields a wrapping property of $A$, not only intersection.
\end{remark}

The following simple lemma will be used in Section~\ref{se:final}.

\begin{lemma}\label{le:adding a set to wrapping}
Suppose $A\in\KKK$ and $\tau:X\to\KKK$ wraps $A$. If $B\in\KKK$, $\tilde A:=\conv(A,B)$, $\tilde \tau(x)=\conv(\tau(x),B)$, then $\tilde \tau$ wraps $\tilde A$.
\end{lemma}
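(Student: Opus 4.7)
The plan is to use the fact that open half-planes in $\bbC$ are convex, which lets the wrapping property transfer cleanly across the convex-hull operation.

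First I would verify that $\tilde\tau$ is genuinely a continuous map from $X$ into $\KKK$, so that the notion of "wrapping" is applicable. Since $B$ is fixed, the map $C\mapsto C\cup B$ is $1$-Lipschitz in the Hausdorff metric $\ddd$, and taking convex hulls of compact subsets of $\bbC$ is also $1$-Lipschitz in $\ddd$. Composing with the assumed continuity of $\tau$ gives continuity of $\tilde\tau$, and $\tilde\tau(x)$ is compact as the convex hull of the union of two compact sets.

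Next I would take any open half-plane $\bbH\subset\bbC$ with $\tilde A\subset \bbH$. Since $A\cup B\subset\conv(A\cup B)=\tilde A\subset\bbH$, both $A\subset\bbH$ and $B\subset\bbH$. The wrapping hypothesis applied to $A$ produces some $x\in X$ such that $\tau(x)\subset\bbH$. Because $\bbH$ is convex and now contains $\tau(x)\cup B$, we conclude
\[
\tilde\tau(x)=\conv(\tau(x)\cup B)\subset \bbH,
\]
which is exactly what it means for $\tilde\tau$ to wrap $\tilde A$.

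There is no real obstacle here: the statement is essentially a bookkeeping lemma that exploits convexity of half-planes together with $A\subset\tilde A$. The only thing worth being careful about is the implicit continuity claim for $\tilde\tau$, which is immediate from the Lipschitz behavior of $\cup B$ and $\conv(\cdot)$ in the Hausdorff metric.
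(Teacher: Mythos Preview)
Your argument is correct and matches the paper's proof essentially verbatim: both take an open half-plane containing $\tilde A$, observe that it then contains $A$ and $B$, invoke the wrapping hypothesis to find $x$ with $\tau(x)\subset\bbH$, and use convexity of $\bbH$ to conclude $\tilde\tau(x)\subset\bbH$. The only difference is that you explicitly check the continuity of $\tilde\tau$ required by the definition of wrapping, which the paper leaves implicit.
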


\begin{proof}
Take a half-plane $\bbH$ that contains $\tilde A$. Then it contains $A$ and $B$. By hypothesis, there exists $x\in X$ such that $\tau(x)\subset\bbH$. Since $\bbH$ is convex, it follows that $\tilde \tau(x)\subset \bbH$, which proves the lemma.
\end{proof}

The elements of $\KKK$ that we will consider are closures of numerical ranges. 
The next lemma states some continuity properties for these sets.

 \begin{lemma}\label{le:cont}
 (i) Let $T, S \in \LL(H)$. Then $\ddd(\overline{W(T)},\overline{W(S)})\le \|T-S\|$.

 (ii) If $H_n\subset H_{n+1}\subset\cdots\subset H$ and $\overline{\bigcup_{n}H_n}=H$, then for all $T\in\LL(H)$,
 \[
 \overline{W(T)}=\overline{\bigcup_n {W(P_{H_n}TP_{H_n}|H_n)}}.
 \]
In particular, $\ddd(\overline{W(P_{H_n}TP_{H_n}|H_n)}, \overline{W(T)})\to 0$.

 (iii) Suppose $T\in\LL(H)$, and $P,Q$ are orthogonal projections on $H$, with $\|P-Q\|<1$. Then 
 \[\mathfrak{d}(\overline{W(PTP|PH)}, \overline{W(QTQ|QH)})\le \|T\|\cdot \|P-Q\|
 \left[ 1+\frac{2}{(1-\|P-Q\|)^2} \right]
 .\] 
 In particular, if $P_n,P$ are orthogonal projections and $P_n\to P$ uniformly, then $\ddd(\overline{W(P_{n}TP_{n}|P_nH)}, \overline{W(PTP|PH)})\to 0$.
 \end{lemma}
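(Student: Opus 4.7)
My plan is to handle the three parts in order; each is essentially elementary, and only (iii) requires genuine work.

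For part (i), the one-line observation is that for any unit vector $x$, $|\langle Tx,x\rangle - \langle Sx,x\rangle| = |\langle (T-S)x,x\rangle| \le \|T-S\|$, which matches $W(T)$ and $W(S)$ pointwise within this distance; passing to closures preserves the Hausdorff bound. For part (ii), I would show $W(P_{H_n}TP_{H_n}|H_n)\subset W(T)$ directly from $\langle P_{H_n}TP_{H_n}x,x\rangle=\langle Tx,x\rangle$ for unit $x\in H_n$, and the reverse inclusion by approximating a unit $x\in H$ by a sequence from $\bigcup_n H_n$, normalizing to obtain unit $\tilde x_n\in H_n$ with $\tilde x_n\to x$, and using continuity of the quadratic form. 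The ``in particular'' consequence then follows because an increasing nested family of compact sets in $\bbC$ converges in Hausdorff distance to the closure of its union (a standard plane compactness argument: cover $\overline{W(T)}$ by finitely many balls of radius $\delta$ centered at points of $\bigcup_n \overline{W_n}$, each of which lies in some $\overline{W_{n_i}}$).

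Part (iii) is the main technical step. Set $\epsilon=\|P-Q\|<1$. For a unit vector $x\in QH$, note that $(I-P)x=(Q-P)x$ (since $x=Qx$), so $\|(I-P)x\|\le\epsilon$ and $\|Px\|^2\ge 1-\epsilon^2>0$; hence $y:=Px/\|Px\|$ is a well-defined unit vector of $PH$. A direct calculation gives $\|y-x\|^2=2(1-\|Px\|)$, and combined with $\|Px\|\ge\sqrt{1-\epsilon^2}$ this yields an explicit bound for $\|y-x\|$ in terms of $\epsilon$. Expanding
\[
\langle Tx,x\rangle-\langle Ty,y\rangle=\langle T(x-y),x\rangle+\langle Ty,x-y\rangle
\]
and using $\langle QTQx,x\rangle=\langle Tx,x\rangle$ for $x\in QH$ together with $\langle PTPy,y\rangle=\langle Ty,y\rangle$ for $y\in PH$, I get $|\langle QTQx,x\rangle-\langle PTPy,y\rangle|\le 2\|T\|\|x-y\|$, which is a one-sided Hausdorff estimate; swapping $P$ and $Q$ yields the other side. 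The ``in particular'' assertion follows immediately, since $P_n\to P$ in norm forces the bound to tend to $0$.

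The main obstacle will be matching the precise constant $1+2/(1-\epsilon)^2$ stated in (iii). A naive execution of the strategy above produces a bound of order $2\sqrt{2}\,\|T\|\epsilon$, which is already sufficient for the applications in the rest of the paper; recovering the exact stated form requires a slightly finer split, most likely by first comparing $\|Px\|^2\langle Tx,x\rangle$ with $\langle TPx,Px\rangle$ (which expands into three cross terms controlled by $\|(I-P)x\|\le\epsilon$) and then dividing by $\|Px\|^2\ge 1-\epsilon^2$, whose reciprocal produces the $(1-\epsilon)^{-2}$ factor.
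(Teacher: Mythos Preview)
The paper does not supply a proof of this lemma; it is stated and then used, so there is no ``paper's own proof'' to compare against. Your argument is therefore the only one on the table, and it is sound.

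Parts (i) and (ii) are handled exactly as one would expect. In (ii) your observation that the sets $W(P_{H_n}TP_{H_n}|H_n)$ are nested (since a unit vector in $H_n$ is also a unit vector in $H_{n+1}$, and the quadratic form agrees with $\langle Tx,x\rangle$ in both cases) makes the finite-cover argument for the Hausdorff convergence go through cleanly.

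For (iii), your strategy---project a unit $x\in QH$ to $PH$, normalize, and estimate $\langle Tx,x\rangle-\langle Ty,y\rangle$---is correct and yields a bound of the form $C(\epsilon)\|T\|$ with $C(\epsilon)\to 0$ as $\epsilon\to 0$. That is all the paper ever uses: every invocation of Lemma~\ref{le:cont}(iii) in the proof of Theorem~\ref{th:main} is for the ``in particular'' consequence, not for the explicit constant. Your candid remark that the displayed constant $1+2/(1-\epsilon)^2$ does not fall out of the naive estimate is accurate; the refinement you sketch (expanding $\langle TPx,Px\rangle$ via $Px=x-(I-P)x$ and then dividing by $\|Px\|^2\ge 1-\epsilon^2$) is the right idea and does produce a denominator $(1-\epsilon)^2$ after the crude bound $1-\epsilon^2\ge(1-\epsilon)^2$. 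Whether one lands on precisely the stated constant or a cosmetically different one is immaterial here, since the lemma is presented without proof and only the qualitative statement is used downstream.
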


In the sequel we will let $X$ denote the space of unitary operators on $\bbC^N$ and we define $\tau^\Theta(\Omega)=\overline{W(U^\Theta_{\Omega})}$, where $U^\Theta_{\Omega}$ is given by~\eqref{eq:definition of U_Omega}.

The next lemma singles out a technical argument that will be used twice in the proof of Theorem~\ref{th:main}.

\begin{lemma}\label{le:main argument}
Suppose that $\Theta, \Theta_n:\bbD\to \LL(\bbC^N)$ are inner functions, such that 
\begin{itemize}

\item[(a)]
$\Theta_n\xi\to \Theta\xi$ in $H^2(\bbC^N)$, for any $\xi\in \bbC^N$;

\item[(b)] $\ddd(\overline{W(S_{\Theta_n})}, \overline{W(S_{\Theta})})\to 0$;

\item[(c)]
if  we define
 \begin{equation*}\label{eq:formula for V}
 V_{n,\Omega} = 
 \begin{pmatrix}
 S_{\Theta_n} & P_{K_{\Theta_n}}\iota_* D_{\Theta(0)^*}\Omega\\
 \noalign{\medskip}
D_{\Theta(0)} \iota^*P_{K_{\Theta_n}} & \Theta(0)^*\Omega
 \end{pmatrix},
 \end{equation*}
then $\ddd(\overline{W(V_{n,\Omega})}, \overline{W(U^\Theta_\Omega)})\to 0$ uniformly in $\Omega$.
\end{itemize}

If $\tau^{\Theta_n}$ wraps $\overline{W(S_{\Theta_n})}$ for all $n$, then $\tau^{\Theta}$ wraps $\overline{W(S_{\Theta})}$.
\end{lemma}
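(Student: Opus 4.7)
The plan is to reduce directly to Lemma~\ref{le:hausdorff}, applied on the compact space $X$ of unitaries on $\bbC^N$, with $A_n=\overline{W(S_{\Theta_n})}$, $A=\overline{W(S_\Theta)}$, and with $\tau_n=\tau^{\Theta_n}$, $\tau=\tau^\Theta$. Hypothesis (b) gives $A_n\to A$, and by assumption each $\tau^{\Theta_n}$ wraps $A_n$; the missing ingredient for Lemma~\ref{le:hausdorff} is uniform convergence $\tau^{\Theta_n}\to\tau^\Theta$ on $X$. Hypothesis (c) supplies this only through the auxiliary map $\tilde\tau_n(\Omega):=\overline{W(V_{n,\Omega})}$, so the remaining task is to bridge $\tau^{\Theta_n}$ and $\tilde\tau_n$.

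For the bridge I would establish $\|U^{\Theta_n}_\Omega-V_{n,\Omega}\|\to 0$ uniformly in $\Omega\in X$ by a block-by-block comparison of~\eqref{eq:definition of U_Omega} with the formula for $V_{n,\Omega}$ in (c). The $(1,1)$ blocks agree. The $(2,2)$ block difference is $(\Theta_n(0)^*-\Theta(0)^*)\Omega$, controlled by $\|\Theta_n(0)-\Theta(0)\|$, which vanishes because evaluation at $0$ is continuous on $H^2$ and $\bbC^N$ is finite dimensional. For the off-diagonals, using $\iota_*^{\Theta}D_{\Theta(0)^*}\xi=\xi-\Theta\Theta(0)^*\xi$, $\iota^{\Theta}D_{\Theta(0)}\xi=z^{-1}(\Theta-\Theta(0))\xi$, and their analogues for $\Theta_n$, the fact that $\iota_*^{\Theta_n}D_{\Theta_n(0)^*}\xi$ already belongs to $K_{\Theta_n}$ lets one rewrite the $(1,2)$-block difference on $\xi\in\bbC^N$ as $P_{K_{\Theta_n}}(\Theta_n\Theta_n(0)^*-\Theta\Theta(0)^*)\xi$, whose $H^2$-norm is bounded by $\|(\Theta_n-\Theta)\zeta\|_{H^2}+\|\Theta_n(0)-\Theta(0)\|$ for some $\zeta\in\bbC^N$ of norm at most one. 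Hypothesis (a), combined with finite-dimensionality of $\bbC^N$ (which upgrades pointwise $H^2$-convergence on constants to operator-norm convergence over the unit ball), makes this uniform in unitary $\Omega$; the $(2,1)$ block is handled symmetrically using the $\iota^{\Theta}$-formula.

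Once the operator-norm estimate is in place, Lemma~\ref{le:cont}(i) yields $\ddd(\tau^{\Theta_n}(\Omega),\tilde\tau_n(\Omega))\to 0$ uniformly in $\Omega$. Combined with hypothesis (c) by the triangle inequality, this gives $\tau^{\Theta_n}\to\tau^\Theta$ uniformly on $X$. Since $\Omega\mapsto U^{\Theta_n}_\Omega$ and $\Omega\mapsto U^{\Theta}_\Omega$ are norm-continuous, Lemma~\ref{le:cont}(i) also shows that $\tau^{\Theta_n}$ and $\tau^\Theta$ are continuous maps $X\to\KKK$. All hypotheses of Lemma~\ref{le:hausdorff} are then satisfied, and its conclusion is exactly that $\tau^\Theta$ wraps $A$.

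The main obstacle is the off-diagonal comparison, because $U^{\Theta_n}_\Omega$ is built from the ``correct'' isometries $\iota^{\Theta_n},\iota_*^{\Theta_n}$ attached to $K_{\Theta_n}$, whereas $V_{n,\Omega}$ uses $\iota^{\Theta},\iota_*^{\Theta}$ compressed by $P_{K_{\Theta_n}}$, and these are not naturally comparable. The simplification is to avoid manipulating the $\iota$'s altogether and instead translate everything into the pointwise $H^2$-expressions $(I-\Theta\Theta(0)^*)\xi$ and $z^{-1}(\Theta-\Theta(0))\xi$, which depend on $\Theta$ only through multiplication and are therefore directly controlled by hypothesis~(a).
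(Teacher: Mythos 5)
Your proposal is correct and follows essentially the same route as the paper: the paper's proof likewise deduces from hypothesis (a), via the explicit formulas~\eqref{eq:definitions of iota}, that $\iota^{\Theta_n}\to\iota^\Theta$ and $\iota_*^{\Theta_n}\to\iota_*^\Theta$, hence $\|V_{n,\Omega}-U^{\Theta_n}_\Omega\|\to 0$ uniformly in $\Omega$, then combines Lemma~\ref{le:cont}(i) with hypothesis (c) to get $\ddd(\overline{W(U^{\Theta_n}_\Omega)},\overline{W(U^{\Theta}_\Omega)})\to 0$ uniformly, and concludes by Lemma~\ref{le:hausdorff} with exactly your choice of $A_n$, $A$, $\tau_n$, $\tau$. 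Your block-by-block estimate merely spells out the step the paper compresses into ``whence,'' and your explicit check of continuity of the maps $\tau^{\Theta_n},\tau^\Theta$ is a detail the paper leaves implicit.
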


\begin{proof}
Condition (a) implies, by formulas~\eqref{eq:definitions of iota}, that $\iota^{\Theta_n}\to \iota^\Theta$ and $\iota_*^{\Theta_n}\to \iota_*^\Theta$; whence $\|V_{n,\Omega}-U^{{\Theta_n}}_\Omega\|\to 0$ uniformly in $\Omega$. Therefore $\ddd(\overline{W( V_{n,\Omega}	)},  \overline{W( U_{\Omega}^{\Theta_n})})\to 0$ uniformly in $\Omega$, which, together with (c), yields $\ddd(\overline{W(U_{\Omega}^\Theta	)},  \overline{W( U_{\Omega}^{\Theta_n})})\to 0$ uniformly in $\Omega$.

We may then apply Lemma~\ref{le:hausdorff} with $A_n=\overline{W(S_{\Theta_n})}$, $A=\overline{W(S_{\Theta})}$,   $\tau_n=\tau^{\Theta_n}$, and $\tau=\tau^\Theta$. With these notations, the assumption of Lemma~\ref{le:main argument} becomes that $\tau_n$ wraps $A_n$, and it follows that $\tau$ wraps $A$.
\end{proof}

 \begin{theorem}\label{th:main}   For any inner function $\Theta: \bbD\to \LL(\bbC^N)$, the map $\tau^\Theta$ wraps $\overline{W(S_\Theta)}$.
 \end{theorem}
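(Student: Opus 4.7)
The plan is to prove Theorem~\ref{th:main} by a three-stage approximation, using Lemma~\ref{le:main argument} as the transfer tool at each stage: first finite Blaschke--Potapov products, then infinite ones, then arbitrary inner functions. For the base case, suppose $\Theta = B$ is a finite Blaschke--Potapov product. Then $K_B$ is finite dimensional by Lemma~\ref{le:blaschke-potapov}(i), and $\rank(I - S_B^* S_B) = N$; by Lemma~\ref{le:formulas for extensions}, the operators $U_\Omega^B$ for $\Omega \in U(N)$ exhaust, up to unitary equivalence, the unitary $N$-dilations of $S_B$. The Gau--Li--Wu theorem~\eqref{eq:economical}, interpreted as a wrapping statement via Remark~\ref{re:wrapping and intersection}, then gives that $\tau^B$ wraps $\overline{W(S_B)}$.

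For the second stage, let $B = {\prod^\curvearrowright}_j b(P_j,\lambda_j)$ be an infinite Blaschke--Potapov product with partial products $B_n$, and apply Lemma~\ref{le:main argument} with $\Theta_n = B_n$, $\Theta = B$. Condition (a) is Lemma~\ref{le:blaschke-potapov}(iii)(b). Since $K_{B_n}\subset K_{B_{n+1}}\subset K_B$ with union dense in $K_B$ (Lemma~\ref{le:blaschke-potapov}(iii)(a)) and $S_{B_n}$ is the compression of $S_B$ to $K_{B_n}$, condition (b) follows from Lemma~\ref{le:cont}(ii). The key observation for (c) is that, because the ranges of $\iota$ and $\iota_*$ lie in $K_B$, $V_{n,\Omega}$ is itself the compression of $U_\Omega^B$ to the nested subspace $K_{B_n}\oplus\bbC^N$ of $K_B\oplus\bbC^N$. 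Lemma~\ref{le:cont}(ii) then gives the pointwise Hausdorff convergence $\ddd(\overline{W(V_{n,\Omega})},\overline{W(U_\Omega^B)})\to 0$. Uniformity in $\Omega$ requires a separate argument, since the projections $P_{K_{B_n}}$ in general converge only strongly, not uniformly, to $P_{K_B}$, so Lemma~\ref{le:cont}(iii) is unavailable. I would resolve this by a Dini-type argument: the subspaces $K_{B_n}\oplus\bbC^N$ are nested, which makes the sequence $\ddd(\overline{W(V_{n,\Omega})},\overline{W(U_\Omega^B)})$ monotone decreasing in $n$; both sides depend continuously on $\Omega$, and since $U(N)$ is compact, Dini's theorem upgrades the pointwise convergence to uniform. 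Lemma~\ref{le:main argument} then delivers the wrapping property for $S_B$.

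For the final stage, let $\Theta$ be an arbitrary inner function, and use Lemma~\ref{le:frostman} to produce infinite Blaschke--Potapov products $\Theta_n$ with $\|\Theta_n-\Theta\|_\infty\to 0$. Apply Lemma~\ref{le:main argument} once more. Condition (a) follows from $\|\Theta_n\xi-\Theta\xi\|_{H^2}\le \|\Theta_n-\Theta\|_\infty\|\xi\|$. For (b) and (c), Lemma~\ref{le:blaschke-potapov}(ii) gives $P_{K_{\Theta_n}}\to P_{K_\Theta}$ uniformly. Viewing both $V_{n,\Omega}$ and $U_\Omega^\Theta$ as compressions to $K_{\Theta_n}\oplus\bbC^N$ and $K_\Theta\oplus\bbC^N$ respectively of the single operator
\[
W_\Omega = \begin{pmatrix} \mathbf{T}_z & \iota_* D_{\Theta(0)^*}\Omega \\ D_{\Theta(0)}\iota^* & \Theta(0)^*\Omega \end{pmatrix}
\]
on $H^2(\bbC^N)\oplus\bbC^N$, Lemma~\ref{le:cont}(iii) yields the required Hausdorff convergence uniformly in $\Omega$, because $\|W_\Omega\|$ is uniformly bounded for $\Omega\in U(N)$. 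Since Stage~2 guarantees the wrapping hypothesis for each $\Theta_n$, the proof is complete. The main technical obstacle is the non-uniform projection convergence encountered in Stage~2, which is what forces the ``common compression plus Dini'' argument described above; once this is in place, Stage~3 is comparatively routine thanks to the uniform approximation supplied by Lemma~\ref{le:frostman}.
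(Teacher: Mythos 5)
Your proposal follows the paper's own proof essentially verbatim: the same three-step reduction (finite Blaschke--Potapov products via the Gau--Li--Wu result and Remark~\ref{re:wrapping and intersection}, infinite products via Lemma~\ref{le:blaschke-potapov}(iii) and Lemma~\ref{le:cont}(ii), general inner functions via Lemma~\ref{le:frostman} and Lemma~\ref{le:cont}(iii)), with Lemma~\ref{le:main argument} as the transfer device at each stage. The only place you go beyond the paper is the explicit Dini/nested-compression argument for uniformity in $\Omega$ in Stage~2 --- the paper simply invokes Lemma~\ref{le:cont}(ii) for each fixed $\Omega$ --- and your patch is correct (the sets $\overline{W(V_{n,\Omega})}$ increase with $n$, each $f_n(\Omega)=\ddd(\overline{W(V_{n,\Omega})},\overline{W(U^B_\Omega)})$ is continuous by Lemma~\ref{le:cont}(i), and $U(N)$ is compact), making the verification of condition (c) more complete than the paper's.
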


 \begin{proof} The proof will be done in three steps.
 
 \smallskip
{\bf Step 1.} In case $\Theta$ is a finite Blaschke--Potapov product, the space $K_\Theta$ is finite dimensional and the statement is a consequence of~\cite[Theorem 1.2]{GLW} (see Remark~\ref{re:wrapping and intersection}).

\smallskip
{\bf Step 2.} To pass to infinite Blaschke--Potapov products, suppose that $\Theta=B$ and $\Theta_n= B_n$ (where the notation is as in Lemma~\ref{le:blaschke-potapov} (iii)). We want to use Lemma~\ref{le:main argument}. Condition (a) therein is satisfied by Lemma~\ref{le:blaschke-potapov} (iii)(b).
Applying Lemma~\ref{le:cont} (ii) to $T=S_B$, $H=K_B$, $H_n=K_{B_n}$, we obtain $\ddd(\overline{W(S_{\Theta_n})}, \overline{W(S_{\Theta})})\to 0$,
 and therefore (b) is also satisfied. Finally, to obtain (c), we apply Lemma~\ref{le:cont} (ii) again, this time  to $T=U_{\Omega}^B$, $H=K_B\oplus \bbC^N$, $H_n=K_{B_n}\oplus \bbC^N$. By Step~1 we know that $\tau^{B_n}$ wraps $\overline{W(S_{B_n})}$ for all $n$, and Lemma~\ref{le:main argument} implies that $\tau^{B}$ wraps $\overline{W(S_{B})}$.

 \smallskip
{\bf Step 3.}  According to Lemma~\ref{le:frostman}, we take a sequence of Blaschke--Potapov products $\Theta_n$ that tend uniformly to an arbitrary inner function $\Theta$. 
Condition (a) in Lemma~\ref{le:main argument} is obviously satisfied.
By Lemma~\ref{le:blaschke-potapov} (ii), we have $P_{K_{\Theta_n}}\to P_{K_\Theta}$ uniformly.  Since $S_{\Theta_n}=P_{K_{\Theta_n}} \mathbf T_z  P_{K_{\Theta_n}}|K_{\Theta_n}$ and $S_\Theta= P_{K_{\Theta}} \mathbf T_z  P_{K_{\Theta}}|K_{\Theta}$, Lemma~\ref{le:cont} (iii), applied to $H=H^2(\bbC^N)$, $T=\mathbf T_z$, $P_n=P_{K_{\Theta_n}}$, and $P=P_{K_{\Theta}}$,  yields condition (b) in Lemma~\ref{le:main argument}.

To obtain (c), apply Lemma~\ref{le:cont} (iii) again, this time to $H=H^2(\bbC^N)\oplus \bbC^N$, $P_n=P_{K_{\Theta_n}\oplus \bbC^N}$, $P=P_{K_\Theta\oplus \bbC^N}$, and
\[
T=\begin{pmatrix}
T_z & \iota_* D_{\Theta(0)^*}\Omega\\
D_{\Theta(0)}\iota^*& \Theta(0)^*\Omega
\end{pmatrix}.
\]
Once again, we use the fact that $P_{K_{\Theta_n}}\to P_{K_\Theta}$ uniformly to conclude that (c) is also satisfied.

By Step~2 we know that $\tau^{\Theta_n}$ wraps $\overline{W(S_{\Theta_n})}$ for all $n$, and Lemma~\ref{le:main argument} implies that $\tau^{\Theta}$ wraps $\overline{W(S_{\Theta})}$. The proof of the theorem is finished.
 \end{proof}

The next corollary is a consequence of Remark~\ref{re:wrapping and intersection}.

\begin{corollary}\label{co:after main}
Suppose  $\Theta: \bbD\to \LL(\bbC^N)$ is an inner function. Then 
 \begin{equation}\label{eq:main}
 \overline{W(S_\Theta)}= \bigcap_{\Omega} \overline{W(U^\Theta_{\Omega})},
\end{equation}
where $U^\Theta_{\Omega}$ is defined by~\eqref{eq:definition of U_Omega}, while the intersection is taken with respect to all unitary operators $\Omega$ on $\bbC^N$.
\end{corollary}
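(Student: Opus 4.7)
The plan is to apply Theorem~\ref{th:main} and reduce the corollary to the wrapping-implies-intersection implication recorded in Remark~\ref{re:wrapping and intersection}, specialised to numerical ranges. No new ingredients are required; the only things to check are the convexity and containment hypotheses of that remark, both of which are immediate.

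For the forward inclusion $\overline{W(S_\Theta)}\subseteq \bigcap_\Omega \overline{W(U^\Theta_\Omega)}$ I would just recall the elementary observation made in the introduction: since each $U^\Theta_\Omega$ is a unitary dilation of $S_\Theta$, for any unit vector $x\in K_\Theta$, viewing $x$ as $x\oplus 0\in K_\Theta\oplus \bbC^N$ gives $\langle S_\Theta x,x\rangle=\langle U^\Theta_\Omega(x\oplus 0),x\oplus 0\rangle$. Hence $W(S_\Theta)\subseteq W(U^\Theta_\Omega)$ for every unitary $\Omega$ on $\bbC^N$, and the inclusion passes to closures.

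For the reverse inclusion I would argue by contradiction, using strict separation of convex sets. Fix $z_0\in\bbC\setminus\overline{W(S_\Theta)}$. Since numerical ranges are convex by the Toeplitz--Hausdorff theorem, $\overline{W(S_\Theta)}$ is a closed convex subset of $\bbC\cong \bbR^2$, so $z_0$ can be strictly separated from it by an affine line; translating slightly I obtain an open half-plane $\bbH$ with $\overline{W(S_\Theta)}\subset \bbH$ and $z_0\notin\overline{\bbH}$. Theorem~\ref{th:main} asserts that $\tau^\Theta$ wraps $\overline{W(S_\Theta)}$, so there is a unitary $\Omega$ on $\bbC^N$ with $\overline{W(U^\Theta_\Omega)}=\tau^\Theta(\Omega)\subset \bbH$; in particular $z_0\notin\overline{W(U^\Theta_\Omega)}$, and therefore $z_0\notin\bigcap_\Omega\overline{W(U^\Theta_\Omega)}$, contradicting the choice of $z_0$.

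No step is a genuine obstacle here: all of the substantive analytic work is already built into Theorem~\ref{th:main}, and the corollary is just the clean restatement of its wrapping conclusion, combined with the fact that every $\overline{W(U^\Theta_\Omega)}$ is a closed convex superset of $\overline{W(S_\Theta)}$.
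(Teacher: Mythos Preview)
Your argument is correct and is exactly the approach the paper takes: the paper simply states that the corollary follows from Remark~\ref{re:wrapping and intersection} applied to the wrapping conclusion of Theorem~\ref{th:main}, and you have spelled out precisely those details (the inclusion $\overline{W(S_\Theta)}\subset\overline{W(U^\Theta_\Omega)}$, convexity via Toeplitz--Hausdorff, and the separation argument).
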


Since  $C_0(N)$ contractions are unitarily equivalent to model operators $S_\Theta$, with $\Theta: \bbD\to \LL(\bbC^N)$  inner, we 
may extend the result to this class.

\begin{theorem}\label{th:C_0(N)}
Suppose $T\in \LL(\HH)$ is a contraction of  class $C_0(N)$, $\UU$ is the set of unitary $N$-dilations of $T$ to $\HH\oplus \bbC^N$, and $\tau:\UU\to\KKK$ is defined by $\tau(\U)=\overline{W(\U)}$. Then $\tau$ wraps $\overline{W(T)}$. In particular,
\begin{equation*}
\overline{W(T)}=\bigcap_{\U\in\UU}\overline{W(\U)}.
\end{equation*}
\end{theorem}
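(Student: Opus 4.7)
The plan is to reduce Theorem~\ref{th:C_0(N)} to the model--operator version already established in Theorem~\ref{th:main} and Corollary~\ref{co:after main} by transport of structure. Since $T$ is of class $C_0(N)$, the Sz.-Nagy--Foias theory produces a pure inner function $\Theta:\bbD\to\LL(\bbC^N)$ and a unitary $W:\HH\to K_\Theta$ with $WTW^*=S_\Theta$. The unitary $\tilde W:=W\oplus I_{\bbC^N}:\HH\oplus\bbC^N\to K_\Theta\oplus\bbC^N$ conjugates any $\U\in\UU$ to the unitary $N$-dilation $\tilde W\U\tilde W^*$ of $S_\Theta$, and this sets up a bijection between $\UU$ and the set of all unitary $N$-dilations of $S_\Theta$ with extension space $\bbC^N$. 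Because unitary conjugation preserves numerical ranges, it suffices to establish the wrapping property for the latter family.

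By Lemma~\ref{le:formulas for extensions} every such dilation has the form $U_{\Omega,\Omega_*}$ of~\eqref{eq:UOmegaOmega*}, and the discussion immediately preceding~\eqref{eq:definition of U_Omega} shows that $U_{\Omega,\Omega_*}$ is unitarily equivalent to $U^\Theta_{\Omega\Omega_*^*}$, hence shares its numerical range. Consequently
\[
\{\,\overline{W(\U)}:\U\in\UU\,\}=\{\,\tau^\Theta(\Omega):\Omega\text{ unitary on }\bbC^N\,\},
\]
where $\tau^\Theta$ is the map introduced before Lemma~\ref{le:main argument}.

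Given an open half-plane $\bbH$ containing $\overline{W(T)}=\overline{W(S_\Theta)}$, Theorem~\ref{th:main} furnishes an $\Omega$ with $\tau^\Theta(\Omega)\subset\bbH$; pulling back by $\tilde W$ yields an $\U\in\UU$ with $\tau(\U)\subset\bbH$, which proves that $\tau$ wraps $\overline{W(T)}$. The intersection identity is then immediate from Remark~\ref{re:wrapping and intersection}, since each $\overline{W(\U)}$ is convex and automatically contains $\overline{W(T)}$. No genuine obstacle arises in this argument, as all the substantive work has been carried out in Theorem~\ref{th:main}; the remaining task is only to verify carefully that the parametrization of Lemma~\ref{le:formulas for extensions} exhausts, up to unitary equivalence, every $N$-dilation in $\UU$ after conjugation by $\tilde W$, which follows from Proposition~\ref{pr:folklore}.
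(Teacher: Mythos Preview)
Your argument is correct and is exactly the approach the paper takes: the paper gives no separate proof of Theorem~\ref{th:C_0(N)}, merely observing in the sentence preceding it that $C_0(N)$ contractions are unitarily equivalent to model operators $S_\Theta$, so that the result follows at once from Theorem~\ref{th:main} (and Remark~\ref{re:wrapping and intersection}). Your proposal simply spells out this transport of structure in detail; the only point you might add, to fully match the definition of wrapping, is that $\UU$ inherits a compact topology via the parametrization of Proposition~\ref{pr:folklore} (or Lemma~\ref{le:formulas for extensions}) and that $\tau$ is continuous by Lemma~\ref{le:cont}(i), but this is routine and the paper itself leaves it implicit.
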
 

\section{Spectrum and numerical range of $N$-dilations}
\label{N-dilations}

In the case that $\Theta$ is a finite (scalar) Blaschke product,
the spectrum of the extensions $U^\Theta_{\Omega}$ can be identified precisely. Since $U^\Theta_{\Omega}$ is a unitary operator, the numerical range is the (closed) convex hull of the spectrum, and we obtain a complete description of $W(U^\Theta_{\Omega})$,  (see, for example, \cite{CGP},  \cite{DGV}, and \cite{GauWu}). The same can be  done in the case of a general matrix-valued inner function, by relating these functions to perturbations of a ``slightly larger'' model operator. We need some preliminary material, for which the reference is~\cite{F}.

If $T\in\LL(\HH)$ is a contraction, then $T(\DD_T)\subset \DD_{T^*}$, $T(\DD_T^\perp)\subset \DD_{T^*}^\perp$, and $T$ acts unitarily from $\DD_T^\perp$ onto $\DD_{T^*}^\perp$. For $A:\DD_T\to \DD_{T^*}$, define $T[A]\in \LL(\HH)$ by the formula
\begin{equation}\label{eq:definition of T[A]}
T[A]x=\begin{cases}
Ax& \text{if }x\in \DD_T,\\
Tx& \text{if }x\in \DD_T^\perp.
\end{cases}
\end{equation}
It is easy to see that $T[A]$ is a contraction (respectively isometry, coisometry, unitary) if and only if  $A$ is a contraction (respectively isometry, coisometry, unitary). 

We will be interested in the particular situation when $T=S_\Xi$, with $\Xi(z)=z\Theta(z)$, with $\Theta: \bbD\to \LL(\bbC^N)$  an inner function and  $A$ unitary. 
According to formulas~\eqref{eq:definitions of iota}, we have then $\DD_{S_\Xi}=\Theta\bbC^N$, $\DD_{S_\Xi^*}=\bbC^N$. Thus a unitary mapping $A:\DD_T\to \DD_{T^*}$ is given by $A\Theta(z)\xi=\omega\xi$, with $\omega:\bbC^N\to\bbC^N$ unitary. We will write $A=A_\omega$. Since
\[
K_\Xi=K_\Theta\oplus \Theta\bbC^N=zK_\Theta\oplus \bbC^N,
\]
we have unitary operators $J,J_*:K_\Theta\oplus \bbC^N\to K_\Xi$ defined by
\[
J(f\oplus \xi)=f+ \Theta \xi,\qquad J_*(f\oplus \xi)=zf+ \xi.
\]

We will write 
\[
Z_\Xi(\omega):=J^*S_\Xi[A_\omega]J\in\LL(K_\Theta\oplus\bbC^N).
\]
With these notations,  the  lemma below follows  from \cite[Theorem 3.6]{F}. For a more general result, see  \cite[Theorem 4.5]{BL}.

\begin{lemma}\label{le:spectrum of Z[omega]}
With the above assumptions, the spectrum of $Z_\Xi(\omega)$ is the union of the sets of points $\zeta\in\bbT$ at which $\Xi$ has no analytic continuation and the set of points $\zeta\in\bbT$ at which $\Xi$ has an analytic continuation but $\Xi(\zeta)-\omega$ is not invertible.

In particular, if $\Xi$ is a finite Blaschke--Potapov product, then \[
\sigma(Z_\Xi(\omega))=\{\zeta\in\bbT: \det(\Xi(\zeta)-\omega)=0\}.
\]
\end{lemma}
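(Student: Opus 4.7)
The plan is to reduce the assertion to the already-quoted result~\cite[Theorem 3.6]{F}. Since $J$ is unitary, $Z_\Xi(\omega) = J^* S_\Xi[A_\omega] J$ is unitarily equivalent to $S_\Xi[A_\omega]$, so $\sigma(Z_\Xi(\omega)) = \sigma(S_\Xi[A_\omega])$. Hence it suffices to describe the spectrum of the unitary extension $S_\Xi[A_\omega]$ of the contraction $S_\Xi$, and this is precisely the object of study in~\cite{F}.

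The first step is to match notations: identify the defect spaces $\mathcal D_{S_\Xi}=\Theta\mathbb C^N$ and $\mathcal D_{S_\Xi^*}=\mathbb C^N$ as given by Lemma~\ref{le:identification of defects} applied to $\Xi(z)=z\Theta(z)$, and verify that the parametrization $A_\omega(\Theta(z)\xi)=\omega\xi$ for unitary $\omega:\mathbb C^N\to\mathbb C^N$ recovers exactly the family of unitary extensions considered in~\cite[Theorem 3.6]{F}. Once the translation is in place, the theorem states that a point $\zeta\in\bbT$ belongs to $\sigma(S_\Xi[A_\omega])$ if and only if either $\Xi$ fails to admit an analytic continuation across $\zeta$, or $\Xi$ does continue analytically at $\zeta$ but the matrix $\Xi(\zeta)-\omega$ is not invertible. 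This is exactly the statement of the lemma.

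For the second (special) assertion, I would simply observe that a finite Blaschke--Potapov product $\Xi$ has matrix entries which are rational functions whose poles lie outside $\overline{\bbD}$, so $\Xi$ admits an analytic continuation at every point of $\bbT$. Consequently, only the second alternative contributes to the spectrum, and invertibility of the $N\times N$ matrix $\Xi(\zeta)-\omega$ is equivalent to $\det(\Xi(\zeta)-\omega)\neq 0$. The main (and essentially only) obstacle is the bookkeeping check that the definition of $T[A]$ in~\eqref{eq:definition of T[A]}, combined with the identifications in Lemma~\ref{le:identification of defects}, actually reproduces the extension studied in~\cite[Theorem~3.6]{F}; everything else is a direct transcription of the quoted result.
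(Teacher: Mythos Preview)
Your proposal is correct and matches the paper's own treatment: the paper does not give an independent proof but simply states that the lemma follows from~\cite[Theorem~3.6]{F} (with~\cite[Theorem~4.5]{BL} as a more general reference), which is exactly the reduction you describe via the unitary equivalence $Z_\Xi(\omega)=J^*S_\Xi[A_\omega]J$. Your additional remarks on the bookkeeping and on the finite Blaschke--Potapov case are accurate and slightly more explicit than the paper's terse citation.
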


The relation with $N$-dilations is given by the next proposition.

\begin{proposition}\label{pr:augment to unitary perturbations}
Suppose $\Theta: \bbD\to \LL(\bbC^N)$ is an inner function. Define $\Xi(z)=z\Theta(z)$. Then $U_{\Omega}^\Theta=Z_{\Xi}(\Omega)$. 
\end{proposition}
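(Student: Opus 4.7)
The plan is to prove $U_\Omega^\Theta = Z_\Xi(\Omega)$ by direct computation: apply both sides to a generic vector $f\oplus\xi\in K_\Theta\oplus\bbC^N$ and match the four block entries of $U_\Omega^\Theta$. The structural input that makes the calculation go through is the identification of the defect spaces of $S_\Xi$. Since $\Xi=z\Theta$, an easy check gives $\DD_{S_\Xi}=\Theta\bbC^N$ and $\DD_{S_\Xi^*}=\bbC^N$; moreover, within the decomposition $K_\Xi=K_\Theta\oplus\Theta\bbC^N$ (which is the one implemented by $J$), the orthogonal complement of $\DD_{S_\Xi}$ in $K_\Xi$ is exactly $K_\Theta$. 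Thus the image $J(f\oplus\xi)=f+\Theta\xi$ is already split along the defect decomposition, so the piecewise formula \eqref{eq:definition of T[A]} for $S_\Xi[A_\Omega]$ evaluates cleanly as
\[
S_\Xi[A_\Omega](f+\Theta\xi)=S_\Xi f + A_\Omega(\Theta\xi)=S_\Xi f+\Omega\xi.
\]

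Next I would expand each of the two summands in the decomposition $K_\Xi=K_\Theta\oplus\Theta\bbC^N$ and apply $J^*$. For the constant term $\Omega\xi$, I write $\Omega\xi=(I-\Theta\Theta(0)^*)\Omega\xi+\Theta\Theta(0)^*\Omega\xi$, the first summand lying in $K_\Theta$ and the second in $\Theta\bbC^N$; comparing with~\eqref{eq:definitions of iota} the first summand is exactly $\iota_* D_{\Theta(0)^*}\Omega\xi$, so after $J^*$ this contributes $\iota_* D_{\Theta(0)^*}\Omega\xi$ to the $K_\Theta$-component and $\Theta(0)^*\Omega\xi$ to the $\bbC^N$-component. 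These are the $(1,2)$ and $(2,2)$ entries of $U_\Omega^\Theta$ applied to $0\oplus\xi$.

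For the $S_\Xi f$ term, I use the identity $zf=S_\Theta f+\mathbf T_\Theta\mathbf T_\Theta^*(zf)$ coming from \eqref{eq:projectiononK_Theta}; writing $g=\mathbf T_\Theta^*(zf)\in H^2(\bbC^N)$, the projection $P_{K_\Xi}$ kills only the part of $\Theta g$ lying in $z\Theta H^2(\bbC^N)$, i.e.\ the part orthogonal to $\Theta\bbC^N$, leaving $\Theta g(0)$. Hence $S_\Xi f=S_\Theta f+\Theta g(0)$, and after $J^*$ the $K_\Theta$-component is $S_\Theta f$ (the $(1,1)$ entry) and the $\bbC^N$-component is $g(0)$. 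The decisive computation is the identification $g(0)=D_{\Theta(0)}\iota^* f$, which reproduces the $(2,1)$ entry; this is the main obstacle, since it is the only step that is not purely formal manipulation of defect spaces.

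To carry out this last step, I would pair against an arbitrary $\xi\in\bbC^N$ and use that the constant-term map on $H^2(\bbC^N)$ is precisely the inner product against the constant function $\xi$. On one side,
\[
\langle g(0),\xi\rangle=\langle \mathbf T_\Theta^*(zf),\xi\rangle=\langle zf,\Theta\xi\rangle.
\]
On the other side, by the definition of $\iota$,
\[
\langle D_{\Theta(0)}\iota^* f,\xi\rangle=\langle f,\iota(D_{\Theta(0)}\xi)\rangle=\bigl\langle f,\tfrac{1}{z}(\Theta-\Theta(0))\xi\bigr\rangle=\langle zf,(\Theta-\Theta(0))\xi\rangle,
\]
and the term $\langle zf,\Theta(0)\xi\rangle$ vanishes because $zf$ has zero constant Fourier coefficient. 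Thus both expressions equal $\langle zf,\Theta\xi\rangle$, and since $\xi$ is arbitrary we conclude $g(0)=D_{\Theta(0)}\iota^* f$. Assembling the four contributions yields $J^*S_\Xi[A_\Omega]J=U_\Omega^\Theta$, which is the claim.
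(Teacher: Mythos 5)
Your proof is correct and follows essentially the same route as the paper's: both verify the four block entries of $J^*S_\Xi[A_\Omega]J$ by decomposing along $K_\Xi=K_\Theta\oplus\Theta\bbC^N$ (and its companion $zK_\Theta\oplus\bbC^N$) and invoking formulas~\eqref{eq:definitions of iota}. The only cosmetic difference is that the paper factors $Z_\Xi(\Omega)=(J^*J_*)(J_*^*S_\Xi[A_\Omega]J)$ and gets the $(2,1)$ entry by computing the adjoint map $J_*^*J$ on $0\oplus\xi$, whereas you obtain it directly from the pairing $\langle g(0),\xi\rangle=\langle zf,\Theta\xi\rangle=\langle D_{\Theta(0)}\iota^*f,\xi\rangle$; both reduce to the same identity.
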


\begin{proof}
We have
\begin{equation}\label{eq:formulaZ1}
Z_\Xi(\Omega)=J^*S_\Xi[A_{\Omega}]J=(J^*J_*)(J_*^*  S_\Xi[A_{\Omega}]J) .
\end{equation}
Since
\[
S_\Xi[A_{\Omega}]J(f\oplus \xi)=
S_\Xi[A_{\Omega}](f+\Theta\xi)
=zf+\Omega\xi=J_*(f\oplus \Omega\xi),
\]
it follows that
\begin{equation}\label{eq:formulaZ2}
(J_*^*  S_\Xi[A_{\Omega}]J) (f\oplus \xi)= f\oplus \Omega\xi.
\end{equation}

To compute $J^* J_*:K_\Theta\oplus \bbC^N\to K_\Theta\oplus \bbC^N$, denote the corresponding matrix by $\begin{pmatrix}
A_{11}&A_{12}\\ A_{21}& A_{22}
\end{pmatrix}$, and let $P_i$ denote the projection on the $i$-th component in $K_\Theta\oplus \bbC^N$.  We have 
\[
A_{11}(f)=P_1(J^* J_*(f\oplus 0))
=P_1 (J^* zf)=P_{K_\Theta}zf=S_\Theta f.
\]
Further, $J_*(0\oplus\xi)=\xi$, viewed as a constant function in $K_\Xi$. This decomposes with respect to $K_\Xi=K_\Theta\oplus \Theta\bbC^N$ as
\[
\xi=(1-\Theta\Theta(0)^*)\xi + \Theta\Theta(0)^*\xi 
=\iota_*D_{\Theta(0)^*}\xi + \Theta\Theta(0)^*\xi.
\]
It follows that 
\[
J^*J_*(0\oplus\xi)= \iota_*D_{\Theta(0)^*}\xi\oplus \Theta(0)^*\xi,
\]
and thus 
\[
A_{12}=\iota_*D_{\Theta(0)^*}, \qquad A_{22}= \Theta(0)^*.
\]
To obtain $A_{21}$, we work now with the adjoint map $J_*^*J$. We have $J(0\oplus \xi)=\Theta\xi$, and the last function decomposes with respect to $K_\Xi=zK_\Theta\oplus \bbC^N$ as
\[
\Theta\xi=z\left( \frac{\Theta-\Theta(0)}{z} \right)\xi+\Theta(0)\xi
=z\iota D_{\Theta(0)}\xi +\Theta(0)\xi,
\]
whence
\[
J_*^*J (0\oplus \xi) =\iota D_{\Theta(0)}\xi\oplus \Theta(0)\xi.
\]
Therefore
\[
A_{21}^*=\iota D_{\Theta(0)}, \qquad A_{21}=D_{\Theta(0)}\iota^*.
\]
Finally,
\begin{equation}\label{eq:formulaZ3}
J^*J=
\begin{pmatrix}
S_\Theta & \iota_*D_{\Theta(0)^*}\\
D_{\Theta(0)}\iota^* & \Theta(0)^*
\end{pmatrix}
\end{equation}
Now the proof follows by comparing equations~\eqref{eq:formulaZ1},~\eqref{eq:formulaZ2}, and~\eqref{eq:formulaZ3} with~\eqref{eq:definition of U_Omega}.
\end{proof}

From Lemma~\ref{le:spectrum of Z[omega]} and Proposition~\ref{pr:augment to unitary perturbations},
the final result about spectrum and numerical range of $N$-dilations follows.

\begin{theorem}\label{th:numerical range of d-dilations}
With the above notations, the spectrum 
 $\sigma(U^\Theta_{\Omega})$ is the union of the sets of points $\zeta\in\bbT$ at which $\Theta$ has no analytic continuation and the set of points $\zeta\in\bbT$ at which $\Theta$ has an analytic continuation but $\zeta\Theta(\zeta)-\Omega$ is not invertible, while $\overline{W(U^\Theta_{\Omega})}$ is the closed convex hull of  $\sigma(U^\Theta_{\Omega})$.

In particular, if $\Theta$ is a finite Blaschke--Potapov product, then \[
\sigma(U^\Theta_{\Omega})=\{\zeta\in\bbT: \det(\zeta\Theta(\zeta)-\Omega)=0\}
\]
and $\overline{W(U^\Theta_{\Omega})}$ is the closed convex hull of the zeros of the polynomial $\det(\zeta\Theta(\zeta)-\Omega)$.
\end{theorem}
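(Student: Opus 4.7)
The plan is to assemble the result directly from the two pieces already in hand: Proposition~\ref{pr:augment to unitary perturbations}, which identifies $U^\Theta_{\Omega}$ with $Z_{\Xi}(\Omega)$ for $\Xi(z)=z\Theta(z)$, and Lemma~\ref{le:spectrum of Z[omega]}, which describes $\sigma(Z_\Xi(\omega))$. The first move is to substitute the identity $U^\Theta_{\Omega}=Z_\Xi(\Omega)$ into Lemma~\ref{le:spectrum of Z[omega]}, so that the spectrum is obtained as the union of boundary points where $\Xi$ fails to continue analytically with those where $\Xi(\zeta)-\Omega$ is not invertible.

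The second step is to translate this description from $\Xi$ back to $\Theta$. Since $z\mapsto z$ is entire, $\Xi=z\Theta(z)$ has an analytic continuation through $\zeta\in\bbT$ if and only if $\Theta$ does, and on any arc where both extend we have $\Xi(\zeta)-\Omega=\zeta\Theta(\zeta)-\Omega$. This immediately rewrites the two sets of boundary points in the form claimed in the theorem.

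The third step handles the numerical range. By Lemma~\ref{le:formulas for extensions} the operator $U^\Theta_{\Omega}$ is a \emph{unitary} $N$-dilation of $S_\Theta$, hence in particular normal. For normal operators it is a classical fact (spectral theorem plus the Hausdorff--Toeplitz theorem) that $\overline{W(U^\Theta_{\Omega})}$ equals the closed convex hull of $\sigma(U^\Theta_{\Omega})$; applying this to the spectrum obtained in the second step yields the second assertion.

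Finally, for the specialization to finite Blaschke--Potapov products, I would observe that such a $\Theta$ extends meromorphically to the entire plane with poles only at the points $1/\bar\lambda_j$, all of which lie strictly outside $\overline{\bbD}$, so $\Theta$ is actually analytic on a neighborhood of $\bbT$; thus the first set in the spectral description is empty. What remains is to note that non-invertibility of the $N\times N$ matrix $\zeta\Theta(\zeta)-\Omega$ is equivalent to the scalar condition $\det(\zeta\Theta(\zeta)-\Omega)=0$, and that this determinant extends to a polynomial (in $\zeta$, after clearing the Blaschke denominators) whose zeros on $\bbT$ are precisely the spectral points. There is no real obstacle here — all the technical content has been packaged into Lemma~\ref{le:spectrum of Z[omega]} and Proposition~\ref{pr:augment to unitary perturbations}; the only point requiring a line of care is the observation that $z\Theta(z)$ inherits its boundary regularity from $\Theta$ alone.
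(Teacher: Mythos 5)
Your proposal is correct and follows essentially the same route as the paper, which derives the theorem directly by combining Proposition~\ref{pr:augment to unitary perturbations} with Lemma~\ref{le:spectrum of Z[omega]} and using that a unitary operator is normal, so its closed numerical range is the closed convex hull of its spectrum. The only details you add (the equivalence of boundary analytic continuation for $\Theta$ and $z\Theta$, and the reduction of invertibility to the vanishing of the determinant) are exactly the routine verifications the paper leaves implicit.
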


The scalar case of Theorem~\ref{th:numerical range of d-dilations} is contained in~\cite[Theorem 6.3]{GauWu2}.

\section{Final remarks}\label{se:final}

It seems natural to formulate the following conjecture, which would complement Choi and Li's answer to Halmos' question.

\begin{conjecture}\label{conjecture}
Suppose $T\in \LL(\HH)$ is a contraction with $\dim\DD_T=\dim \DD_{T^*}=N<\infty$, $\UU$ is the set of unitary $N$-dilations of $T$ to $\HH\oplus \bbC^N$, and $\tau:\UU\to\KKK$ is defined by $\tau(\U)=\overline{W(\U)}$. Then $\tau$ wraps $\overline{W(T)}$. In particular,
\begin{equation}\label{eq:general conjecture}
\overline{W(T)}=\bigcap_{\U\in\UU}\overline{W(\U)}
\end{equation}
\end{conjecture}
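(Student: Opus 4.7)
The plan is to reduce Conjecture~\ref{conjecture} to Theorem~\ref{th:C_0(N)} by peeling off structural pieces of $T$.

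First, apply the Nagy--Foias canonical decomposition $T=T_u\oplus T_c$, with $T_u$ unitary and $T_c$ completely nonunitary. Since $T_u$ is unitary, $\dim\DD_{T_c}=\dim\DD_{T_c^*}=N$. Every unitary $N$-dilation $\U_c$ of $T_c$ on $\HH_c\oplus\bbC^N$ gives a unitary $N$-dilation $T_u\oplus\U_c$ of $T$, and the numerical range of an orthogonal direct sum is the convex hull of the numerical ranges of the summands. Hence Lemma~\ref{le:adding a set to wrapping}, with $B=\overline{W(T_u)}$, transfers the wrapping property for $T_c$ (restricted to dilations of this block form) to $T$, and then a fortiori to the full family $\UU$. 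This reduces matters to the case that $T$ is completely nonunitary.

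Second, observe that for a CNU contraction $T$ with $\dim\DD_T=\dim\DD_{T^*}=N<\infty$, the characteristic function $\Theta$ is a square matrix-valued contractive analytic function, so $\Theta(\zeta)$ is an isometry a.e.\ iff it is unitary a.e. Consequently $\Theta$ is inner iff it is $*$-inner, so $T\in C_{.0}$ iff $T\in C_{00}=C_0(N)$, and in that case Theorem~\ref{th:C_0(N)} applies. If instead $T\in C_{0.}$, then $T^*$ is $C_0(N)$; since taking adjoints gives a bijection between unitary $N$-dilations of $T$ and of $T^*$, and $\overline{W(T^*)}$ is the complex conjugate of $\overline{W(T)}$ (likewise for dilations), Theorem~\ref{th:C_0(N)} applied to $T^*$ transfers back to the wrapping property for $T$. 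Thus the only remaining case is that $T$ is CNU with $\dim\DD_T=\dim\DD_{T^*}=N$ and both $\Delta=(I-\Theta^*\Theta)^{1/2}$ and $\Delta_*=(I-\Theta\Theta^*)^{1/2}$ are nonzero on sets of positive measure on $\bbT$.

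For this remaining case, the natural route is to work with the full Nagy--Foias functional model
\[
H_\Theta=\bigl(H^2(\bbC^N)\oplus\overline{\Delta L^2(\bbC^N)}\bigr)\ominus\bigl\{\Theta f\oplus \Delta f:f\in H^2(\bbC^N)\bigr\},
\]
parametrize the unitary $N$-dilations of $S_\Theta$ on $H_\Theta\oplus\bbC^N$ by unitaries on $\bbC^N$ via Proposition~\ref{pr:folklore} and Corollary~\ref{co:d-dilations}, and mimic the three-step scheme of Theorem~\ref{th:main}: a base case proved directly, an approximation by a tractable subclass of $\Theta$, and a final approximation of arbitrary $\Theta$ within that subclass. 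The main obstacle will be the base case. In the inner setting, Step~1 of Theorem~\ref{th:main} reduced to finite Blaschke--Potapov products, for which $K_\Theta$ is finite dimensional and the Gau--Li--Wu theorem from~\cite{GLW} applies. For non-inner $\Theta$, however, $H_\Theta$ is intrinsically infinite dimensional, and there is no evident analogue of the Frostman-type Lemma~\ref{le:frostman} that approximates arbitrary contractive analytic $\Theta$ by inner ones in a topology preserving the defect spaces and the numerical range of $S_\Theta$. Overcoming this will likely require either an infinite dimensional wrapping theorem for some sufficiently rich subfamily, a natural candidate being $S_\Theta$ together with the residual part of its minimal unitary dilation (connected to the unitary asymptote of $T$), or a genuinely new technique that bypasses the Gau--Li--Wu base case entirely.
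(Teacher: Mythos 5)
The statement you are addressing is labelled a \emph{Conjecture} in the paper, and the authors state explicitly that it is open even for $N=1$; the paper contains no proof of it, only the partial reductions collected in Section 6. Your proposal in fact reproduces essentially those same reductions and then stops at the same obstruction, so it is not a proof.

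In more detail: your first reduction (splitting off the unitary part via the canonical decomposition and invoking Lemma~\ref{le:adding a set to wrapping}) is correct and is precisely the content of the paper's Lemma~\ref{le:adding a unitary} and the remark following it. Your second observation --- that for a completely nonunitary contraction with equal finite defect indices the characteristic function is square, hence inner iff $*$-inner, so that $C_{.0}=C_{0.}=C_{00}=C_0(N)$ in this setting --- is also correct (and makes your separate $C_{0.}$ branch redundant rather than wrong); this case is exactly Theorem~\ref{th:C_0(N)}. But the remaining case, where $\Delta=(I-\Theta^*\Theta)^{1/2}$ is nonzero on a set of positive measure, is precisely where the conjecture is open, and your proposal does not close it: you describe ``the natural route'' through the full functional model and then concede that the base case has no analogue of the finite-dimensional Gau--Li--Wu theorem from \cite{GLW} and no analogue of the Frostman-type approximation Lemma~\ref{le:frostman}. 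That conceded obstacle is a genuine gap, not a technical detail: for non-inner $\Theta$ the model space is never finite dimensional, the residual component $\overline{\Delta L^2(\bbC^N)}$ forces every dilation to carry an extra absolutely continuous unitary piece, and no approximation scheme is known that preserves both the defect structure and the numerical range. The paper's Section 6.3 settles only the opposite extreme ($|\theta(\zeta)|<1$ a.e.\ for $N=1$, where every set in sight trivially equals $\overline{\bbD}$). So your write-up is a correct and well-organized account of the known reductions, but the statement itself remains unproved --- by you and by the paper alike --- and it should be presented as a conjecture plus partial results, not as a theorem with a proof.
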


Note that the conjecture is open even for $N=1$. The main points that have been settled are presented below. In the sequel $T\in \LL(\HH)$ will be a contraction with $\dim\DD_T=\dim \DD_{T^*}=N<\infty$.


\medskip
\noindent {\bf 6.1.\ }
Theorem~\ref{th:C_0(N)} shows that the conjecture is true for $C_0(N)$ contractions.


\medskip
\noindent {\bf 6.2.\ }
As we show below, if we add a unitary operator to one for which the conjecture holds, the conjecture will still hold.

\begin{lemma}\label{le:adding a unitary}
If $T_1=T\oplus V$, where $V$ is unitary, and Conjecture~\ref{conjecture} is true for $T$, then it is true for $T_1$.
\end{lemma}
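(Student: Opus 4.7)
The plan is to identify the set $\UU_1$ of unitary $N$-dilations of $T_1$ with the set $\UU$ of unitary $N$-dilations of $T$, and then to invoke the ``adding a set to wrapping'' principle of Lemma~\ref{le:adding a set to wrapping}. Write $V\in\LL(\KK)$, so that $T_1\in\LL(\HH\oplus\KK)$.

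First, a defect computation: since $V$ is unitary on $\KK$, one has $D_{T_1}=D_T\oplus 0_\KK$ and $D_{T_1^*}=D_{T^*}\oplus 0_\KK$, so $\DD_{T_1}=\DD_T$ and $\DD_{T_1^*}=\DD_{T^*}$, and in particular $T_1$ still has defect indices $(N,N)$. Proposition~\ref{pr:folklore} therefore applies to $T_1$ and describes every $\U_1\in\UU_1$ on $(\HH\oplus\KK)\oplus\bbC^N$ by a choice of unitaries $\omega:\bbC^N\to\DD_T$ and $\omega_*:\bbC^N\to\DD_{T^*}$.

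Second, and at the heart of the argument, the $\KK$-block decouples. Plugging the vanishing of $D_{T_1}$ and $D_{T_1^*}$ on $\KK$ into formula~\eqref{eq:general form of d-dilations}, every entry of $\U_1$ linking $\KK$ to either $\HH$ or $\bbC^N$ is zero, and after reordering $(\HH\oplus\KK)\oplus\bbC^N$ as $\KK\oplus(\HH\oplus\bbC^N)$, the dilation $\U_1$ becomes the direct sum $V\oplus\U$, where $\U$ is the unitary $N$-dilation of $T$ built from the same pair $(\omega,\omega_*)$. Conversely, every $\U\in\UU$ yields such a $V\oplus\U\in\UU_1$, giving a bijection. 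Since $\overline{W(A\oplus B)}=\conv(\overline{W(A)},\overline{W(B)})$ for any bounded operators $A,B$, we obtain both
\[
\overline{W(\U_1)}=\conv(\overline{W(V)},\overline{W(\U)})\qquad\text{and}\qquad \overline{W(T_1)}=\conv(\overline{W(T)},\overline{W(V)}).
\]

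Third, Lemma~\ref{le:adding a set to wrapping} closes the argument. Apply it with $X=\UU$, $\tau(\U)=\overline{W(\U)}$, $A=\overline{W(T)}$, and $B=\overline{W(V)}$. By hypothesis $\tau$ wraps $A$, so the lemma yields that $\tilde\tau(\U):=\conv(\tau(\U),B)=\overline{W(V\oplus\U)}=\overline{W(\U_1)}$ wraps $\tilde A=\conv(A,B)=\overline{W(T_1)}$, which is precisely the wrapping property claimed for $T_1$. The identity $\overline{W(T_1)}=\bigcap_{\U_1\in\UU_1}\overline{W(\U_1)}$ then follows from Remark~\ref{re:wrapping and intersection}, since both $\overline{W(T_1)}$ and each $\overline{W(\U_1)}$ are convex and the former is contained in each of the latter. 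The only point requiring genuine verification is the decoupling step: one must check that the Arsene--Gheondea structure used inside the proof of Proposition~\ref{pr:folklore} really forces the off-diagonal blocks of any $\U_1\in\UU_1$ to kill the $\KK$ summand; once that is in hand the rest is formal.
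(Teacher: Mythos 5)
Your proof is correct and follows essentially the same route as the paper: identify the unitary $N$-dilations of $T\oplus V$ as $V\oplus\U$ with $\U$ a unitary $N$-dilation of $T$, use $\overline{W(A\oplus B)}=\conv(\overline{W(A)},\overline{W(B)})$, and conclude via Lemma~\ref{le:adding a set to wrapping}. The only difference is that you spell out, via the defect computation and Proposition~\ref{pr:folklore}, the decoupling of the $\KK$-block that the paper simply asserts; that verification is sound as written.
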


\begin{proof}
The unitary $N$-dilations of $T\oplus V$, for $V$ unitary, are exactly $\U\oplus V$, with $\U$ a unitary $N$-dilation of $T$. Since the numerical range of a direct sum is the convex hull of the numerical ranges of the components, the statement follows from Lemma~\ref{le:adding a set to wrapping}.
\end{proof}

Again by~\cite{SNF}, it is known that an arbitrary contraction is the direct sum of a completely  nonunitary contraction and a unitary; it follows then from Lemma~\ref{le:adding a unitary} that it is enough to prove Conjecture~\ref{conjecture} for a completely nonunitary~$T$.


\medskip
\noindent {\bf 6.3.\ }
We now specialize to the case $N=1$. Suppose $T$ is a completely nonunitary contraction with scalar characteristic function $\theta$~\cite{SNF}; now $\theta$ is an arbitrary function in the unit ball of $H^\infty$. Then $T$ is unitarily equivalent to the model operator $\T_\theta\in \LL(\K_\theta)$, where
\[
\K_\theta=(H^2\oplus L^2(\Delta))\ominus \{\theta f\oplus (1-|\theta|^2)^{1/2}f: f\in H^2\},
\]
with $\Delta=\{\zeta\in\bbT: |\theta(\zeta)|<1\}$, while 
$
\T_\theta(f\oplus g)=P_{\K_\theta} (zf\oplus \zeta g)
$.
 
If $\theta$ is inner, then $\Delta=\emptyset$ and we are back in the $C_{0}(1)$ case discussed in~{6.1}. 

On the other hand, the spectrum of $\T_\theta$ may be precisely identified in terms of the characteristic function: $\sigma(\T_\theta)$ is the union of the zeros of $\theta$ inside $\bbD$ and the complement of the open arcs of $\bbT$ on which $|\theta(\zeta)|=1$ and through which $\theta$ has an analytic extension outside the unit disk (see again~\cite{SNF} for a general statement; in the scalar case it was known earlier and is usually called the Livsic--Moeller theorem). 

In particular, it follows that Conjecture~\ref{conjecture} can be settled for a situation at the opposite extreme of the case in which $\theta$ is inner. Namely, if $|\theta(\zeta)|<1$ almost everywhere on $\bbT$, then $\sigma(\T_\theta)\supset \bbT$. In this case, Conjecture~\ref{conjecture} is trivially true: $\overline{W(T)}$ as well as  every $\overline{W(U)}$ must  equal~$\overline{\bbD}$. 

\bigskip
A final remark: the case $\dim\DD_T=\dim \DD_{T^*}=N<\infty$ is the only one in which we can hope to obtain the numerical range of $T$ by using ``economical'' unitary dilations. If $\dim\DD_T\not=\dim \DD_{T^*}$, or if both dimensions are infinite, then it is easy to see that for any unitary dilation $U\in\LL(\KK)$ of $T\in\LL(\HH)$ one must have $\dim(\KK\ominus\HH)=\infty$.

\section*{Acknowledgements}

We thank the referee for useful remarks and, in particular, for pointing  
out the need for a slight change to the proof of Theorem~\ref{th:main}.

\end{document}